\newcommand{\be}{\begin{equation}}
\newcommand{\ee}{\end{equation}}
\newcommand{\bea}{\begin{eqnarray}}
\newcommand{\eea}{\end{eqnarray}}
\newtheorem{theorem}{Theorem}
\newtheorem{conclusion}[theorem]{Conclusion}
\newtheorem{definition}[theorem]{Definition}
\newtheorem{lemma}[theorem]{Lemma}
\newenvironment{proof}[1][Proof]{\noindent\textbf{#1.} }{\ \rule{0.5em}{0.5em}}
\begin{document}

\title{Four-dimensional $SO(3)$-spherically symmetric Berwald Finsler spaces}
\author{Samira Cheraghchi}
\email{samira.cheraghchi@unitbv.ro}
\affiliation{Faculty of Mathematics and Computer Science, Transilvania University, Iuliu Maniu Str. 50, 500091 Brasov, Romania}
\author{Christian Pfeifer}
\email{christian.pfeifer@zarm.uni-bremen.de}
\affiliation{ZARM, University of Bremen, 28359 Bremen, Germany}
\author{Nicoleta Voicu}
\email{nico.voicu@unitbv.ro}
\affiliation{Faculty of Mathematics and Computer Science, Transilvania University, Iuliu Maniu Str. 50, 500091 Brasov, Romania}

\begin{abstract}
We locally classify all $SO(3)$-invariant 4-dimensional pseudo-Finsler Berwald structures. These are Finslerian geometries which are closest to (spatially, or $SO(3)$)-spherically symmetric pseudo-Riemannian ones - and serve as ansatz to find solutions of Finsler gravity equations which generalize the Einstein equations. We find that there exist six classes of non pseudo-Riemannian (i.e., non-quadratic in the velocities) $SO(3)$ spherically symmetric pseudo-Finsler Berwald functions, which have either: a power law, an exponential law, or a one- or two-variable dependence on the velocities.
\end{abstract}

\maketitle

\tableofcontents


\section{Introduction}
\label{sec:Intro}

Symmetries are essential in the study of the geometry of manifolds as well as in the mathematical description of physical systems. The geometry of manifolds can be classified according to the symmetries they possess, and if differential equations describing a physical system possess symmetries, finding solutions becomes tremendously simplified due to the existence of conservation laws and a reduction of degrees of freedom.

One fundamental type of symmetry is spherical (or partial spherical) symmetry, i.e.\ the invariance of the geometry of a manifold, or of differential equations and their solutions under an appropriate action of one of the $SO(n)$ groups. Partial spherical symmetry denotes the cases for $n < D$, where $D$ is the dimension of the manifold under consideration. Numerous physical systems possess, at least to a very good approximation, spherical or partial spherical symmetry - and the understanding of such systems lays the foundation for the understanding of more complicated ones, with less symmetry.

In gravitational physics, the gravitational interaction is identified with the geometry of the spacetime manifold, which is one way how physics and geometry come together \cite{Wald}. Thus, in order to study symmetric gravitating systems, spacetime manifolds admitting the same symmetries are needed. 

The local classification of spherically symmetric pseudo-Riemannian metrics in any dimension is well known. From the perspective of physics, Lorentzian manifolds, i.e.\ pseudo-Riemannian manifolds with a metric of Lorentzian signature,  are of particular interest, since in dimension $D=4$, physical spacetime manifolds are those Lorentzian manifolds whose metric solves the Einstein equations; the most famous partial spherically symmetric metric in four dimensions, describing spherical black holes, is the Schwarzschild metric \cite{Wald}.

But, the geometry of manifolds - or, in physics the geometry of spacetimes - can be described in numerous more general ways than by pseudo-Riemannian geometry. One could consider for example, affine geometry (and its subcases teleparallel or symmetric teleparallel geometry), i.e.\ manifolds equipped with general affine connections (with curvature and torsion, only torsion, or only curvature). For these, geometries, which are particularly interesting for gravity theories beyond general relativity \cite{Saridakis:2021vue} and effective models of quantum gravity \cite{Addazi:2021xuf}, spherical symmetry has been investigated in some detail \cite{Hohmann:2019fvf}. 

Another geometry of manifolds or spacetimes equipped with an affine connection is pseudo-Finsler Berwald geometry \cite{Berwald1926}. This can be seen as the subclass of Finsler geometry \cite{Finsler,Bao,Miron} which is the closest to pseudo-Riemannian one; its distinctive feature is that it still leads to an affine connection on the manifold in consideration, even though the fundamental building block of the geometry of the manifold is a \textit{Finsler function}, providing a pseudo-norm for tangent vectors  that does not necessarily arise from a pseudo-scalar product (actually, if the latter happens, then the considered pseudo-Finsler geometry reduces to pseudo-Riemannian geometry).

In mathematics, positive definite Finsler geometry is a well studied subject. However, in pseudo-Finsler geometry, i.e.\ Finsler geometry with a Finsler metric that is not necessarily positive definite, a lot of questions are still open. From the mathematical perspective, it is of high interest, since a lot of questions and classifications answered in positive definite Finsler geometry do not carry over to pseudo-Finsler geometry, see \cite{Voicu2018,Fuster:2020upk,Javaloyes:2022hph,Pfeifer:2011tk,Minguzzi:2014fxa,Gomez-Lobo:2016qik,Minguzzi2016,Javaloyes:2018lex,Minculete_2021}. Its most prominent application is the one of Finsler spacetimes in classical and quantum gravitational physics \cite{Tavakol1986,Voicu:2009wi,Pfeifer:2011xi,Lammerzahl:2018lhw,Pfeifer:2019,Hohmann_2019,Hohmann:2019sni,Lobo:2020qoa,Addazi:2021xuf, Kapsabelis:2022plf,Carvalho:2022sdz,Garcia-Parrado:2022ith,Zhu:2022blp,Aazami:2022bib,Javaloyes:2022fmp,Hohmann:2020mgs} as extension of general relativity. 
\\
Passing to Berwald-Finsler spherical symmetry, a first important step was made by Elgendi, \cite{Elgendi:2021-1}, who classified all such functions admitting "maximal" spherical symmetry, understood as invariance under $SO(D)$. Yet, for physics, a most prominent situation is the one of $SO(3)$-spherical symmetry of a $D=4$ dimensional Lorentzian manifold, the physical spacetime. Then, this becomes precisely \textit{spatial} spherical symmetry of spacetime manifolds equipped with a time function - and it is of interest to predict the gravitational field of black holes, dust clouds, ordinary, neutron or boson stars based on a Finslerian extension of general relativity. As we will see below, this assumption enlarges quite substantially the class of admissible Finsler functions, compared to the situation of "maximal" spherical symmetry.

\bigskip

In this article, we will derive the coordinate expressions of all possible classes of 4-dimensional pseudo-Finsler Berwald functions that are invariant under the action of $SO(3)$. In particular, if one imposes Lorentzian signature and interprets the dimension which is not affected by the rotations as \textit{time}, these are interpreted as \textit{spatially spherically symmetric Finsler-Berwald spacetime structures}. This selection is tailored for the search of spatially spherically symmetric solutions of the Finsler gravity equations \cite{Hohmann:2018rpp} and its application to the gravitational field of kinetic gases \cite{Hohmann:2019sni,Hohmann:2020yia}. However, the results we find also apply to any pseudo-Finsler Berwald space of dimension $4$. Moreover, pseudo-Berwald spaces are the starting point to construct so called unicorn Finsler spaces, i.e.\ Finsler spaces with vanishing Landsberg tensor, which are not Berwald \cite{Elgendi:2021-1,ELGENDI2021103918}.
\\
The method we use here is the following. Instead of directly imposing the vanishing the so-called Berwald curvature tensor (that singles out Berwald spaces among Finsler spaces), which leads to a nonlinear, second order PDE system, we use the property of affine connectedness of Berwald spaces. More precisely, we start from the most general form of SO(3)-invariant affine connections on a 4-dimensional manifold, which is known from \cite{Hohmann:2019fvf} and find all compatible pseudo-Finsler functions (which will thus be compulsorily of Berwald type); in other words, we reinterpret our problem as a \textit{pseudo-Finsler metrizability} one for an affine connection, see \cite{Muzsnay2006TheEP}, \cite{Bucataru_2011}. This technique has two major advantages. On the one hand, the resulting PDE system is a linear first order one - i.e., much simpler - and, on the other hand, the consistency conditions of this PDE system offer valuable, direct information on the curvature of the resulting pseudo-Finsler spaces. A similar approach was used by the two latter authors together with M. Hohmann, in clasifying cosmologically symmetric Berwald spacetimes, in \cite{Hohmann:2020mgs}.

The structure of the article is as follows: In Section~\ref{sec:BerwFins} we recall the basic definitions of Finsler geometry and Finsler geometry of Berwald type. In particular, we present a further simplified version of a necessary and sufficient condition for a  pseudo-Finsler space to Berwald in Theorem \ref{thm:berw}. In Section~\ref{sec:SphBerw} we evaluate the Berwald condition for 4-dimensional $SO(3)$-symmetric pseudo-Finsler functions and give their classification in Theorems \ref{thm:main} and \ref{thm:main2}. The proofs of these two theorems are presented in Section~\ref{sec:mainpre}, before we conclude in Section \ref{sec:conc}.
 

\section{Berwald Finsler Geometry}
\label{sec:BerwFins}

 We begin by recalling the basic notions of Finsler
geometry \cite{Bao,Miron}. Since our results apply to manifolds with a Finslerian geometry of arbitrary signature, we will introduce the notion of a pseudo-Finsler space with the minimal requirements we need. In particular, our results hold for Finsler spaces and Finsler spacetimes \cite{Beem,Minguzzi:2014fxa,Javaloyes:2018lex,Hohmann:2021zbt}. Further, we prove a refinement of the first order
partial differential system introduced in \cite{Pfeifer:2019tyy}, which provides a necessary and sufficient condition for a Finsler function to be of Berwald type. 
Finally, we recall how to identify $SO(3)$-spherically symmetric pseudo-Finsler spaces \cite{Pfeifer:2011xi}.

Throughout this article we use the following notations. Indices $a,b,c,....$
run from $0$ to $3$. All considered manifolds $M$ are $4$-dimensional and their tangent bundles $TM$ are equipped with manifold induced coordinates, i.e.\ a given coordinate
chart $(U,(x^{a}))$ on $M$ induces a coordinate chart $(TU,(x^{a},\dot x^{a}))$ on $TM$ in by the rule: $Z \in TU$ is labeled by $T_xM \ni Z=\dot x^a {\partial_a}$. The local coordinate basis of $T_{(x,\dot x)}M$
is given by $\{\partial_a = \tfrac{\partial}{\partial x^a}, \dot \partial_a
= \tfrac{\partial}{\partial \dot  x^a} \}$ and the local coordinate basis of $%
T^*_{(x,\dot x)}M$ is $\{dx^a,d\dot x^a \}$. If there is no risk of confusion, we will skip the indices of the coordinates, i.e., refer to $(x^{a},\dot x^{a})$ briefly as~$(x,\dot x)$.


\subsection{Finsler geometry}
\label{ssec:Fins} This section briefly reviews the notion of pseudo-Finsler space $(M,L)$ and its canonically associated geometric objects.

A \textit{conic subbundle} of the tangent bundle $(TM, \pi, M)$ is an open subset $\mathcal{A} \subset TM \setminus 0$ with $\pi(\mathcal{A})=M$, which is stable under positive rescaling of vectors, i.e., for all $(x,\dot{x}) \in \mathcal{A}$ and all $\lambda \in (0,\infty)$, one must get: $(x,\lambda \dot{x}) \in \mathcal{A}$.

\begin{definition}, \cite{Bejancu}:
Let $M$ be a manifold and $\mathcal{A} \subset TM \setminus {0} $, a conic subbundle. A pseudo-Finsler sructure on $M$ is a smooth function $L:\mathcal{A} \to \mathbb{R}$ such that:

\begin{itemize}
\item $L(x,\lambda \dot x) = \lambda^2 L(x,\dot x)$ for all $\lambda > 0$;

\item at all $(x,\dot{x}) \in \mathcal{A}$ and in any local chart around $(x,\dot{x})$, the matrix
\begin{align}
g_{ab}(x,\dot x) := \frac{1}{2}\dot{\partial}_a\dot{\partial}_b L(x,\dot x)
\end{align}
is non-degenerate.
\end{itemize}
\end{definition}

Any pseudo-Finsler metric can be continuously prolonged as $0$ at $\dot{x}=0$. The mapping $g:\mathcal{A}\rightarrow T^{0}_{2}M, (x,\dot{x})\mapsto g_{(x,\dot{x})}=g_{ab}dx^{a} \otimes dx^{b}$, is called the $L$-metric.

The classical $1$-homogeneous Finsler function is defined as $F(x,\dot x) =
\epsilon \sqrt{|L(x,\dot x)|}$ with $\epsilon = \text{sign}(L)$ and in turn, it defines the arc length for curves $x(\tau)$ on $(M,L)$ as: 
\begin{align}  \label{eq:flength}
S[x] = \int F(x,\dot x) d\tau.
\end{align}

Pseudo-Finsler spaces include classical positive definite Finsler spaces, obtained when 
$\mathcal{A} = TM\setminus \{0\}$ and $g$ is positive definite. Different
versions of pseudo-Finsler spaces are obtained when appropriate signature
conditions for $g$ are added \cite{Beem,Minguzzi:2014fxa,Javaloyes:2018lex,Hohmann:2019sni}. For example,
demanding that, for all $x\in M$, there exists a connected component $\mathcal{T}_x$ of the fiber $\mathcal{A}_{x} = \mathcal{A} \cap T_xM$ such that the signature of $g$ is Lorentzian $(+,-,-,-)$ and $L>0$ on $\mathcal{T}_x$, leads to \textit{Finsler spacetimes} as considered in \cite{Hohmann:2021zbt}.

The geometry of pseudo-Finsler spaces is encoded in canonical tensor fields on $\mathcal{A}\subset TM \setminus {0}$, which are constructed from derivatives of $L$. We
briefly list here the local coordinate expressions of those which will be relevant for further considerations.

\begin{itemize}
\item The fundamental building block for the geometry of a
pseudo-Finsler space $(M,L)$ are the \emph{geodesic spray coefficients} 
\begin{align}  \label{eq:geodspray}
G^a = \frac{1}{4}g^{ab}\left(\dot x^c \partial_c \dot \partial_b L -
\partial_b L\right)\,.
\end{align}
They appear naturally in the geodesic equation for arc length parametrized
curves $c:[a,b]\rightarrow M, s\mapsto x(s)$, 
\begin{align}  \label{eq:geodeq}
\frac{d^2 x^a}{ds^2} + 2 G^a\left(x,\tfrac{dx}{ds}\right) = 0\,,
\end{align}
derived from extremizing the length functional \eqref{eq:flength}.

\item The \emph{Cartan nonlinear connection coefficients} are derived from
the geodesic spray as 
\begin{align}  \label{eq:CNLin}
N^a{}_b = \dot\partial_b G^a\,.
\end{align}

\item The \emph{local adapted bases} to the above connection, of the (co)tangent bundles
$T\mathcal{A}$, $T^{*}\mathcal{A}$ are, respectively: 
\begin{align}
\{\delta_a =
\partial_a - N^b{}_a \dot \partial_b,\dot \partial_a\}\,,\quad \{dx^a,\delta\dot x^a = d\dot x^a + N^a{}_b dx^b\}\,.
\end{align}
An important fact is that the Finsler Lagrangian $L$ is horizontally
constant; in coordinates, this reads:
\begin{align}  \label{eq:dL=0}
\delta_a L = 0\, \quad a=0,...,3.
\end{align}

\item In addition to the Cartan nonlinear connection, it is possible to construct several linear connections on $\mathcal{A} \subset TM\setminus \{0\}$. For us, the \emph{Berwald linear
connection} is of importance; it is defined by its covariant derivative of
the adapted basis elements as: 
\begin{align}  \label{eq:BerwLin}
\nabla_{\delta_a} \delta_b = \dot \partial_a N^c{}_b \delta_c,\quad
\nabla_{\delta_a} \dot \partial_b = \dot \partial_a N^c{}_b
\dot\partial_c,\quad \nabla_{\dot\partial_a} \delta_b = 0,\quad
\nabla_{\dot\partial_a} \dot \partial_b = 0\,.
\end{align}
It follows from \eqref{eq:dL=0} that $\nabla_{\delta_a}L = 0$.
\end{itemize}

With these notions, we can define Berwald manifolds and give a necessary and
sufficient condition, in terms of a first order partial differential
system, to identify them. Afterwards, we recall the notion of $SO(3)$-spherical symmetry on a pseudo-Finsler space.


\subsection{Berwald geometry}
Berwald-Finsler spaces are considered as the pseudo-Finsler spaces which are closest to pseudo-Riemannian ones, \cite{Berwald1926}. They can be characterized in various ways, \cite{Szilasi2011,Pfeifer:2019tyy}. Their
main feature is that the Berwald linear connection \eqref{eq:BerwLin} descends into a well defined affine connection, with coefficients $\Gamma^c{}_{ab}(x) $, on the base manifold $M$; equivalently, in any local chart, the Cartan nonlinear connection coefficients \eqref{eq:CNLin} are linear in $%
\dot x$, or the geodesic spray coefficients \eqref{eq:geodspray} are quadratic in $%
\dot x$: 
\begin{align}\label{eq:BerwAffCon}
	\dot \partial_a N^c{}_b = \Gamma^c{}_{ab}(x) \quad\Leftrightarrow\quad
	N^c{}_b = \Gamma^c{}_{ab}(x)\dot x^a \quad\Leftrightarrow\quad 2 G^c =
	\Gamma^c{}_{ab}(x)\dot x^a\dot x^b\,.
\end{align}

Surprisingly, though $G^a$ involve second order derivatives of the Finsler function $L$, one can identify Finsler functions of Berwald type (those which define a Berwald Finsler manifold) with the help of a \textit{first order} partial
differential system. The idea was first brought up for special Berwald manifolds in \cite{Tavakol1986} and then generalized in \cite{Pfeifer:2019tyy}. 
Here, we prove a further simplified version of this Berwald condition.
The strategy used below actually reduces the problem of deciding whether a pseudo-Finsler function is of Berwald type, to a problem of \textit{pseudo-Finsler metrizabillity} of a (particular class of)second order Euler Lagrange PDE systems, as discussed in \cite{Muzsnay2006TheEP,Bucataru_2011}. Actually, the statement below is also a refinement of Proposition 3 (characterizing Landsberg metrizability of sprays) of the paper by Muzsnay, \cite{Muzsnay2006TheEP}.

\begin{theorem}[Berwald Condition]\label{thm:berw}
	Let $(M,L)$ be a pseudo-Finsler space, then the following statements are
	equivalent:
	
	\begin{enumerate}
		\item $L$ is of Berwald type.
		
		\item There exists a symmetric (torsion-free) connection on $M,$
		with coefficients $\Gamma^c{}_{ab}=\Gamma^c{}_{ab}(x)$ such that, with respect to the induced horizontal derivative 
		$\delta_a = \partial_a - \Gamma^c{}_{ab}(x)\dot x^b\dot\partial_c$: 
		\begin{equation}
			\delta_aL =0.
			\label{Berwald_cond}
		\end{equation}
	\end{enumerate}
	
	Moreover, if $L$ is of Berwald type, then the connection with the property 2. is uniquely determined - and it is the canonical (Berwald) connection of $(M,L)$.
	
\end{theorem}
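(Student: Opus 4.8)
The plan is to prove the chain of implications $(1)\Rightarrow(2)\Rightarrow(1)$ together with the uniqueness claim, leveraging the characterization \eqref{eq:BerwAffCon} and the horizontal-constancy property \eqref{eq:dL=0}.

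First I would address $(1)\Rightarrow(2)$. If $L$ is of Berwald type, then by \eqref{eq:BerwAffCon} the Cartan nonlinear connection coefficients are linear in the fiber coordinate, $N^c{}_b = \Gamma^c{}_{ab}(x)\dot x^a$, where $\Gamma^c{}_{ab}(x) = \dot\partial_a N^c{}_b$ depends on $x$ only. Since $N^c{}_b = \dot\partial_b G^c$ and $G^c$ is a spray (it is $2$-homogeneous in $\dot x$), the coefficients $\Gamma^c{}_{ab}$ are automatically symmetric in $a,b$: indeed $\Gamma^c{}_{ab}=\dot\partial_a\dot\partial_b G^c$. So this $\Gamma$ defines a torsion-free affine connection on $M$, and its induced horizontal derivative is exactly $\delta_a = \partial_a - N^c{}_a\dot\partial_c = \partial_a - \Gamma^c{}_{ab}(x)\dot x^b\dot\partial_c$, i.e.\ the canonical Cartan horizontal derivative. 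Then \eqref{eq:dL=0} gives precisely $\delta_a L = 0$, which is \eqref{Berwald_cond}.

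Next, $(2)\Rightarrow(1)$, which I expect to be the substantive direction. Suppose some torsion-free $\Gamma^c{}_{ab}(x)$ satisfies \eqref{Berwald_cond}. The strategy is to show that the geodesic spray of $L$ must coincide with the (quadratic) spray $2\tilde G^c := \Gamma^c{}_{ab}(x)\dot x^a\dot x^b$ of this connection, so that by \eqref{eq:BerwAffCon} $L$ is Berwald. Differentiating $\delta_a L = 0$, written out as $\partial_a L = \Gamma^c{}_{ab}\dot x^b \dot\partial_c L$, I apply $\dot\partial_e$ and use the definitions $g_{ab}=\tfrac12\dot\partial_a\dot\partial_b L$ to relate $\partial_a\dot\partial_e L$ to the $\Gamma$'s and to $g$; comparing with the formula \eqref{eq:geodspray} for $G^a$, the terms organize exactly into $G^a = \tilde G^a = \tfrac12\Gamma^a{}_{bc}\dot x^b\dot x^c$. (Concretely, one shows $\dot x^c\partial_c\dot\partial_b L - \partial_b L = 2 g_{bc}\Gamma^c{}_{de}\dot x^d\dot x^e$, using symmetry of $\Gamma$ and the homogeneity of $L$; then \eqref{eq:geodspray} gives $G^a = \tfrac12\Gamma^a{}_{de}\dot x^d\dot x^e$.) Hence $2G^a$ is quadratic in $\dot x$ with $x$-dependent coefficients, which by \eqref{eq:BerwAffCon} is exactly the Berwald property.

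Finally, uniqueness: if $\Gamma$ and $\Gamma'$ are two torsion-free connections satisfying \eqref{Berwald_cond}, then subtracting the two identities $\partial_a L = \Gamma^c{}_{ab}\dot x^b\dot\partial_c L = \Gamma'^c{}_{ab}\dot x^b\dot\partial_c L$ gives $(\Gamma^c{}_{ab}-\Gamma'^c{}_{ab})\dot x^b\dot\partial_c L = 0$; applying $\dot\partial_e$ and contracting with the inverse metric $g^{ce}$ (nondegenerate by hypothesis), then using $1$-homogeneity of $\dot\partial_c L$, one isolates $\Gamma^c{}_{ab}=\Gamma'^c{}_{ab}$ — here the key inputs are torsion-freeness (symmetry in $a,b$) and nondegeneracy of $g$. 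Combined with the computation in $(2)\Rightarrow(1)$, which shows $2G^a = \Gamma^a{}_{bc}\dot x^b\dot x^c$ for any such $\Gamma$, this identifies $\Gamma$ with $\dot\partial_b\dot\partial_c G^a$, i.e.\ the canonical Berwald connection. The main obstacle is the bookkeeping in $(2)\Rightarrow(1)$: one must carefully exploit Euler's theorem (homogeneity of $L$ and its derivatives) and the symmetry of $\Gamma$ to see that the extra terms generated by $\dot\partial_e\delta_a L = 0$ collapse precisely into the spray formula \eqref{eq:geodspray}, rather than leaving an uncontrolled remainder.
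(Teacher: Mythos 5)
Your proposal is correct and follows essentially the same route as the paper: both directions hinge on showing that the hypothesis $\delta_a L=0$ forces $2G^c=\Gamma^c{}_{ab}(x)\dot x^a\dot x^b$ (you compute the spray directly from \eqref{eq:geodspray} using the symmetry of $\Gamma$ and nondegeneracy of $g$, while the paper subtracts the identity $\delta_a L=0$ for the canonical nonlinear connection and then differentiates and contracts — a cosmetic difference). The uniqueness claim is likewise obtained, as in the paper, by differentiating $2G^c=\Gamma^c{}_{ab}\dot x^a\dot x^b$ twice in $\dot x$.
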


\begin{proof}
	
	\begin{itemize}
		\item $1 \Rightarrow 2$: If $L$ is of Berwald type, then the statement is satisfied by its Berwald linear connection on~$\mathcal{A}$, whose connection coefficients are defined by \eqref{eq:BerwLin} - and which, in this case, satisfy, in any local chart, \eqref{eq:BerwAffCon}.		
		
		\item $2 \Rightarrow 1$: Assume \eqref{Berwald_cond} holds. Also, we know
		that for any Finsler Lagrangian and its canonical nonlinear connection  $N$, the equation $\delta_a L = \partial_a L-N^b{}_a \dot
		\partial_b L = 0$ holds. Subtracting both equations yields 
		\begin{align}  \label{eq:BerwCondProof1}
			\left(\Gamma^c{}_{ab}(x)\dot x^b - N^c{}_a\right)\dot\partial_c L = 0\,.
		\end{align}
		Differentiating with respect to $\dot{x}^{d},$ we then find 
		\begin{align}
			\left(\Gamma^c{}_{ad}(x) - \dot\partial_d N^c{}_a\right)\dot\partial_c
			L+\left(\Gamma^c{}_{ab}(x)\dot x^b - N^c{}_a\right)2 g_{cd} = 0\,.
		\end{align}
		Contracting this equation again with $\dot x^a$ gives 
		\begin{align}
			\left(\Gamma^c{}_{ad}(x)\dot x^a - N^c{}_d\right)\dot\partial_c
			L+\left(\Gamma^c{}_{ab}(x)\dot x^b\dot x^a - 2 G^c\right)2 g_{cd} = 0\,.
		\end{align}
		Employing \eqref{eq:BerwCondProof1}, we are left with 
		\begin{align}
			\left(\Gamma^c{}_{ab}(x)\dot x^b\dot x^a - 2 G^c\right)2 g_{cd} = 0\,.
		\end{align}
		Since $g_{cd}$ are the components of a non-degenerate metric, we get
		\begin{align}  \label{eq:BerwCondProof2}
			\Gamma^c{}_{ab}(x)\dot x^b\dot x^a = 2 G^c\,,
		\end{align}
		in particular, $G^c$ are quadratic in $\dot{x}$ and thus $L$ must be of Berwald type.
		
	\end{itemize}
	Finally, assuming that $(M,L)$ is Bwrald, the uniqueness statement follows immediately by differentiating \eqref{eq:BerwCondProof2} twice with respect to $\dot{x}^a,\dot{x}^b$.	
\end{proof}

\vspace{12pt}

Thus, finding a Berwald structure $L$ on $M$ reduces to solving the equations 
\begin{align}  \label{Berwald_cond2}
	\delta_a L= \partial_a L- \Gamma^c{}_{ab}(x)\dot x^b\dot\partial_c L=0\,,
\end{align}
for a given torsion-free affine connection on $M$.

\bigskip

Thus, in Berwald-Finsler geometry, the Berwald connection is, similarly to the Levi-Civita connection in Riemannian geometry, the unique affine torsionless connection on $M$, which is metric (with respect to $L$). 

\noindent Actually, for $TM\setminus{\{0\}}$-smooth positive definite Finsler manifolds, it is known by Szabo's Theorem \cite{Szabo} that if the given connection is the canonical connection of a Berwald metric, then it is also the Levi-Civita connection for some Riemannian metric. This is in general not true for Berwald manifolds of arbitrary signature \cite{Fuster:2020upk}.


\subsection{Spatial Spherical symmetry}

We are recalling here the results derived in all detail in \cite{Pfeifer:2011xi}.
A manifold induced symmetry of a pseudo-Finsler space $(M,L)$ is a
diffeomorphism $\psi:M\to M$, whose natural lift $\Psi = T\psi$ to $TM$ leaves the
Finsler Lagrangian invariant: 
\begin{align}  \label{eq:symm}
L \circ \Psi = L\,.
\end{align}
Infinitesimally, if $X = \xi^a(x)\partial_a$ is the generator of a 1-parameter group of diffeomorphisms $\psi_{\varepsilon}$ on $M$, then the natural lifts $\Psi_{\varepsilon}$ are generated by the complete lift $X^C=\xi^a(x)\partial_a + \dot x^b \partial_b \xi^a(x) \dot\partial_b$ of $X$ to $TM$. The symmetry condition is translated into
\begin{align}
X^C(L)=0\,.
\end{align}

\bigskip

For the rest of the paper, we will consider manifolds $M$ such that they admit a chart $U\subset M$, which can be covered by spherical coordinates $(x^a):=(t,r,\theta,\phi)$ - and we will work in such a chart\footnote{We will not discuss the various topologies of manifolds that can support $SO(3)$ spherical symmetry; for such a discussion, we refer, e.g., to the recent paper by Krupka and Brajercik, \cite{Krupka-Brajercik}.}. 

Inspired by the application to spacetime physics, we will call $t$ the time coordinate and $(r,\theta,\phi)$, spatial spherical coordinates, though we do not make any assumption on the signature of the $L$-metric.
Accordingly, we will call \textit{spatial spherical symmetry}, the invariance of $L$ under the action of $SO(3)$ involving only the spatial coordinates (i.e., the action of $SO(3)$ on the spatial sheets $t=const.$).

\bigskip

Thus, spatial spherical symmetry is defined by three vector fields, which generate the $\mathfrak{so}(3)$ Lie algebra. In the coordinates $(t,r,\theta,\phi,\dot{t},\dot{r},\dot{\theta},\dot{\phi})$ induced by the local spherical coordinates $(x^a):=(t,r,\theta,\phi)$ on $M$, their
complete lifts to the tangent bundle are given by: 
\begin{align}
\begin{split}
&X^C_1=\sin\phi\partial_\theta+\cot\theta\cos\phi\partial_\phi+\dot{\phi}%
\cos\phi\dot{\partial}_\theta-\bigg(\dot\theta\frac{\cos\phi}{\sin^2\theta}%
+\dot\phi\cot\theta\sin\phi\bigg)\dot{\partial}_\phi\,, \\
&X^C_2=-\cos\phi\partial_\theta+\cot\theta\sin\phi\partial_\phi+\dot{\phi}%
\sin\phi\dot{\partial}_\theta-\bigg(\dot\theta\frac{\sin\phi}{\sin^2\theta}%
+\dot\phi\cot\theta\cos\phi\bigg)\dot{\partial}_\phi\,, \\
&X^C_3=\partial_\phi\,.
\end{split}%
\end{align}
Applying the symmetry condition $X^C_I(L)=0, I=1,2,3$, yields that the most general spatially spherically symmetric Finsler function on $U$ is of the form
\begin{align}  \label{eq:sphL}
L(t,r,\theta,\phi,\dot t, \dot r,\dot \theta, \dot \phi) = L(t,r,\dot t,\dot
r,w), \quad \text{ with }\quad w^2 = \dot
\theta^2 + \sin^2\theta \dot \phi^2\,.
\end{align}

For spherically symmetric functions as above, the partial derivatives with respect to $\theta, \dot{\theta}$ and $\dot{\phi}$ can be expressed in terms of $w$-derivatives as
\begin{align}\label{eq:tow}
	\partial_\theta = \tfrac{\dot\phi^2 \sin\theta\cos\theta}{w}\partial_w\,, \quad
	\dot \partial_{\theta} = \tfrac{\dot \theta}{w}\partial_w\,,\quad 	 
	\dot \partial_\phi = \tfrac{\dot \phi \sin^2\theta}{w}\partial_w\,.  \quad	
\end{align}

\bigskip

Accordingly, we obtain:
\begin{align} \label{def:partial_w}
\dot \theta \dot \partial_\theta + \dot \phi \dot\partial_\phi = w\partial_w\,.
\end{align}

The above relation can actually be regarded as the \textit{definition} of a local vector field $\partial_{w}$ - that may act also on (not necessarily spherically symmetric) functions.

This insight on the simplification of the dependence of $L$ implied by spherical symmetry will allow us to solve the Berwald condition.


\section{The Berwald condition in spherical symmetry}\label{sec:SphBerw}

To write down the Berwald condition \eqref{Berwald_cond2}
for the most general spherically symmetric Finsler Lagrangian of Berwald
type, we need the most general spherically symmetric torsion free affine
connection coefficients on $M$ as input. We will perform all derivations in local spherical coordinates $(t,r,\theta,\phi)$ on a fixed chart.

In \cite{Hohmann:2019fvf}, these connection coefficients have been found. In
general, they are parametrized by $20$ free functions $k_{I}=k_{I}(t,r)$ of $%
t$ and $r$, in $4$ dimensions. Imposing the vanishing torsion condition,
i.e. the symmetry in the lower indices of the affine connection
coefficients, one is left with $12$ free functions, which appear in the
nonvanishing affine connection components, as follows:
\begin{align}
\Gamma _{tt}^{t}& =k_{1}(t,r), & \Gamma _{tr}^{t}& =k_{2}(t,r), & \Gamma
_{rr}^{t}& =k_{3}(t,r), & \Gamma _{tt}^{r}& =k_{4}(t,r), \label{eq:ac1}\\
\Gamma _{rr}^{r}& =k_{5}(t,r), & \Gamma _{tr}^{r}& =k_{6}(t,r), & \Gamma
_{\theta \theta }^{t}& =\frac{\Gamma _{\phi \phi }^{t}}{\sin ^{2}\theta }%
=k_{7}(t,r), & \Gamma _{\phi t}^{\phi }& =\Gamma _{\theta t}^{\theta
}=k_{8}(t,r), \label{eq:ac2}\\
\Gamma _{\phi r}^{\phi }& =\Gamma _{\theta r}^{\theta }=k_{9}(t,r), & \Gamma
_{\theta \theta }^{r}& =\frac{\Gamma _{\phi \phi }^{r}}{\sin ^{2}\theta }%
=k_{10}(t,r), & \sin \theta \Gamma _{t\theta }^{\phi }& =-\frac{\Gamma
_{\phi t}^{\theta }}{\sin \theta }=k_{11}(t,r), & & \label{eq:ac3}\\
\sin \theta \Gamma _{r\theta }^{\phi } &=-\frac{\Gamma _{r\phi }^{\theta }}{\sin \theta }=k_{12}(t,r), & 
\Gamma _{\theta \phi }^{\phi } &=\Gamma _{\phi\theta }^{\phi }=\cot {\theta }, 
& \Gamma _{\phi \phi }^{\theta } &=-\sin
\theta \cos \theta \,. & &\label{eq:ac4}
\end{align}

Further, for the Berwald condition, the explicit expressions of the Cartan nonlinear connection
coefficients $N^{a}{}_{b}=\Gamma ^{a}{}_{bc}(x)\dot{x}^{c}
$ are needed: 
\begin{align}
N_{~t}^{t}& =k_{1}\dot{t}+k_{2}\dot{r}, & N_{~t}^{r}& =k_{4}\dot{t}+k_{6}%
\dot{r}, & N_{~t}^{\theta }& =k_{8}\dot{\theta}-k_{11}\dot{\phi}\sin \theta ,
& N_{~t}^{\phi }& =\frac{k_{11}}{\sin {\theta }}\dot{\theta}+k_{8}\dot{\phi},
\label{G's} \\
N_{~r}^{t}& =k_{2}\dot{t}+k_{3}\dot{r}, & N_{~r}^{r}& =k_{6}\dot{t}+k_{5}%
\dot{r}, & N_{~r}^{\theta }& =k_{9}\dot{\theta}-k_{12}\dot{\phi}\sin \theta ,
& N_{~r}^{\phi }& =\frac{k_{12}}{\sin {\theta }}\dot{\theta}+k_{9}\dot{\phi},
\\
N_{~\theta }^{t}& =k_{7}\dot{\theta}, & N_{~\theta }^{r}& =k_{10}\dot{\theta}%
, & N_{~\theta }^{\theta }& =k_{8}\dot{t}+k_{9}\dot{r}, & N_{~\theta }^{\phi
}& =\frac{k_{11}}{\sin {\theta }}\dot{t}+\frac{k_{12}}{\sin {\theta }}\dot{r}%
+\dot{\phi}\cot \theta , \\
N_{~\phi }^{t}& =k_{7}\dot{\phi}\sin ^{2}\theta , & N_{~\phi }^{r}& =k_{10}%
\dot{\phi}\sin ^{2}{\theta }, & N_{~\phi }^{\theta }& =(-k_{11}\dot{t}-k_{12}%
\dot{r}-\dot{\phi}\cos {\theta })\sin {\theta }, & N_{~\phi }^{\phi }& =k_{8}%
\dot{t}+k_{9}\dot{r}+\dot{\theta}\cot {\theta }\,.
\end{align}

\bigskip

A first simplification of the Berwald condition \eqref{Berwald_cond}, in spherical symmetry, is given by the following Lemma.

\begin{lemma}
The (sub-)system consisting of the equations $\delta_\theta L = 0 = \delta_\phi L$ is equivalent to:

\begin{align} 
\delta_w L &:= \left(w  k_7 \dot \partial_t  +w  k_{10} \dot \partial_r + (k_8 \dot t + k_9 \dot r)\partial_w \right)L = 0, \label{eq:delta_w_L} \\
 k_{11} &= k_{12} = 0\,. \label{eq:k11k12vanish}
\end{align}
\end{lemma}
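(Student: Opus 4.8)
The strategy is to write out $\delta_\theta L = 0$ and $\delta_\phi L = 0$ explicitly using the adapted horizontal derivative $\delta_a = \partial_a - N^b{}_a\dot\partial_b$, together with the Cartan nonlinear connection coefficients listed in \eqref{G's}ff and the spherical-symmetry reductions \eqref{eq:tow}--\eqref{def:partial_w}. First I would substitute the components $N^b{}_\theta$ and $N^b{}_\phi$ into $\delta_\theta L = \partial_\theta L - N^b{}_\theta \dot\partial_b L$ and $\delta_\phi L = \partial_\phi L - N^b{}_\phi \dot\partial_b L$. Since $L = L(t,r,\dot t,\dot r,w)$, the derivatives $\partial_\theta L$, $\dot\partial_\theta L$, $\dot\partial_\phi L$ all collapse onto $\partial_w L$ via \eqref{eq:tow}, while $\dot\partial_t L$ and $\dot\partial_r L$ survive untouched; $\partial_\phi L = 0$ outright.

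Carrying this out, the $\theta$-equation becomes a linear combination of $\dot\partial_t L$, $\dot\partial_r L$ and $\partial_w L$ whose coefficients are explicit functions of $(t,r,\theta,\dot t,\dot r,\dot\theta,\dot\phi)$; the terms proportional to $k_{11},k_{12}$ appear with a factor $\dot\theta/w$ (coming from $\dot\partial_\theta$) and a factor with different $\theta$-dependence than the $k_8,k_9$ terms. The key computational observation is that after collecting, the $\theta$-equation reads (schematically) $w\,k_7\,\dot\partial_t L + w\,k_{10}\,\dot\partial_r L + (k_8\dot t + k_9\dot r)\partial_w L = (k_{11}\dot t + k_{12}\dot r)\,(\text{something involving }\dot\phi\sin\theta/w\text{-type factor})\,\partial_w L$, i.e. $\delta_w L = (k_{11}\dot t + k_{12}\dot r)\cdot(\ldots)$, and similarly the $\phi$-equation produces a companion relation. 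I would then argue that, because $\delta_w L$ as defined in \eqref{eq:delta_w_L} has no explicit dependence on $\dot\theta,\dot\phi$ other than through $w$, whereas the right-hand sides carry a genuine $\dot\theta$- or $\dot\phi$-dependence that cannot be absorbed into $w$ (the combinations $\dot\theta\,\partial_w L$ and $\dot\phi\sin\theta\,\partial_w L$ are functionally independent from the $w$-homogeneous left-hand side), both sides must vanish separately. This forces $k_{11} = k_{12} = 0$ and, with that, $\delta_w L = 0$; conversely \eqref{eq:delta_w_L} and \eqref{eq:k11k12vanish} plainly imply $\delta_\theta L = \delta_\phi L = 0$ by reversing the substitution.

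The main obstacle is the bookkeeping in the separation-of-variables step: one must be careful that the coefficient multiplying $(k_{11}\dot t + k_{12}\dot r)$ really is functionally independent of the $\dot\theta,\dot\phi$-dependence packaged inside $\delta_w L$, so that one cannot have a nontrivial cancellation between the two. Concretely, I would treat $\dot\theta$ and $\dot\phi$ (equivalently $w$ and, say, $\dot\theta/w$) as independent variables at fixed $(t,r,\dot t,\dot r)$ and match coefficients of the independent monomials; the terms with $k_{11},k_{12}$ sit at a different "angular weight" than those with $k_8,k_9,k_7,k_{10}$. A secondary subtlety is that $\partial_w L$ could in principle vanish on an open set, but then $L$ would be independent of $w$ and the $\theta,\phi$-equations reduce to $k_{11}=k_{12}=0$ trivially as well, so the conclusion still holds; I would mention this degenerate branch briefly. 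Everything else is substitution and simplification using \eqref{eq:tow}.
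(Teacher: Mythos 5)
Your main argument is correct and follows essentially the same route as the paper: expanding $\delta_\theta L$ and $\delta_\phi L$ in coordinates, the $\partial_\theta$, $\dot\partial_\theta$, $\dot\partial_\phi$ derivatives collapse onto $\partial_w$, the $\cot\theta$-terms cancel, and each equation splits into a $\delta_w L$ piece plus a term of the form $(k_{11}\dot t + k_{12}\dot r)\cdot(\text{ratio of }\dot\theta,\dot\phi)\cdot\partial_w L$; since that ratio varies at fixed $w$, the two pieces must vanish separately. The paper achieves the same separation by taking the explicit linear combinations $\delta_\theta \pm \tfrac{\dot\theta}{\sin^2\theta\,\dot\phi}\delta_\phi$, which is just a cleaner packaging of your coefficient-matching in $\dot\theta,\dot\phi$; the two arguments are equivalent.

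The one genuine flaw is your treatment of the degenerate branch $\partial_w L = 0$. You claim that in this case ``the $\theta,\phi$-equations reduce to $k_{11}=k_{12}=0$ trivially as well,'' but the opposite happens: every occurrence of $k_{11}$ and $k_{12}$ in $\delta_\theta L$ and $\delta_\phi L$ is multiplied by $\partial_w L$, so if $\partial_w L = 0$ those terms drop out identically and $k_{11},k_{12}$ are left completely unconstrained — the stated equivalence would then be false. The correct resolution (and the one the paper uses) is that $\partial_w L = 0$ means $L$ is independent of $\dot\theta,\dot\phi$, so $g_{\theta a}=g_{\phi a}=0$ and the $L$-metric is degenerate, contradicting the standing hypothesis that $L$ is a pseudo-Finsler structure; hence this branch is excluded rather than harmless. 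With that correction your proof is complete.
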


\begin{proof}
The system $\delta_\theta L = 0 = \delta_\phi L$, is equivalent to
\begin{align} \label{eq:delta_w_def}
- \frac{w}{\dot \theta}\left(\delta_\theta + \tfrac{\dot \theta}{\sin^2\theta \dot \phi} \delta_\phi \right)L = 0\,,\quad \textrm{and}\quad  - \frac{w}{\dot \theta}\left(\delta_\theta - \tfrac{\dot \theta}{\sin^2\theta \dot \phi} \delta_\phi \right)L = 0\,,
\end{align}
The first of these equations, written in coordinates, becomes \eqref{eq:delta_w_L}, whereas the second one implies that either $\partial_w L = 0$ (which would lead to a degenerate metric tensor $g$ and thus cannot be a valid solution), or necessarily $k_{11} =k_{12}=0\,.$
\end{proof}

\bigskip

Therefore, in the following, we will always consider $k_{11} =k_{12}=0$. Moreover, it turns out it is convenient to introduce the locally defined vector field:
\begin{align} \label{def_delta_w}
\delta_w := w  k_7 \dot \partial_t  +w  k_{10} \dot \partial_r + (k_8 \dot t + k_9 \dot r)\partial_w \,,
\end{align}
where $\partial_w$ is defined by \eqref{def:partial_w}.
Note: The symbol $\delta_w$ is chosen just for the uniformity of writing; of course, $\delta_w$ is not an element of the adapted basis to the connection $N$. 
To be more precise, it is related, e.g., to $\delta_{\theta}$ by:
\begin{align} \label{eq:rel_theta_w}
\delta_{\theta}=\left(\partial_{\theta}-\dot{\phi}\cot\phi\partial_{\dot{\phi}} \right)-\frac{\dot{\theta}}{w}\delta_w.
\end{align}

Further, expressing also the remaining equations \eqref{Berwald_cond2} and adding the homogeneity condition,  we end up with the following four equations:
\begin{align}
	\delta_t L &= \partial_t L - (k_1 \dot t + k_2 \dot r) \dot \partial_t L - (k_4 \dot t + k_6 \dot r) \dot \partial_r L - k_8 w \partial_w L = 0 \label{eq:deltt}\,,\\
	\delta_r L &= \partial_r L - (k_2 \dot t + k_3 \dot r) \dot \partial_t L - (k_6 \dot t + k_5 \dot r) \dot \partial_r L - k_9 w \partial_w L = 0 \label{eq:deltr}\,,\\
	\delta_w L &=w  k_7 \dot \partial_t L +w  k_{10} \dot \partial_r L + (k_8 \dot t + k_9 \dot r)\partial_w L = 0 \label{eq:deltaw}\,,\\
	2 L &= \dot t \dot \partial_t L + \dot r \dot \partial_t L + w \partial_w L \,. \label{eq:homL}
\end{align}

Solving these equations leads to the two main theorems of our work, which we will prove in the following sections. Before stating these results, some remarks are necessary:
\begin{itemize}
	\item Since we are looking for a local coordinate characterization of Berwald-Finsler functions with $SO(3)$-sperical symmetry, the relevant situation is $M:=U,$ where $U\subset \mathbb{R}^{4}\setminus \{0\} $ is a chart domain.
	\item The system \eqref{eq:deltt}-\eqref{eq:homL} is an overdetermined PDE system. Its consistency conditions will be expressed (as we will see in the next sections - and as expected, taking into account previous works on Finsler metrizability of sprays, e.g., \cite{Muzsnay2006TheEP, Bucataru_2011}) in terms of the Lie brackets of the vector fields $\delta_t,\delta_r,\delta_w$. 
	
	\item Equation \eqref{eq:deltaw} does not contain derivatives w.r.t. the coordinates $t$ and $r$, which is why we will solve it first. But, if $k_7,...,k_{10}$ are all zero, it becomes an identity - and this case needs a separate treatment. Moreover, if \eqref{eq:deltaw} is \textit{not} an identity, at least one of the coefficients $k_7,k_{10}$ must be nonzero (or else, we would get $\partial_{w}L=0$, which is not a valid solution).
	
	\item Assuming, for instance, that $k_{10} \neq 0$, the following quantities make sense in the given chart:
	\begin{align}\label{A-F_defdef}
	\begin{split}
	a &= \tfrac{k_7}{k_{10}},\quad b = \tfrac{k_8}{k_{10}},\quad c = \tfrac{k_9 k_{10} - k_7 k_8}{k_{10}^2}\\
	A &:=b\left( aa_{1}+a_{2}\right) +\left( ab+c\right) \left(aa_{3}+a_{4}\right) -a_{5}\left( 2ab+c\right)\,,   \\
	B &:=a\left( aa_{3}+a_{4}\right) -\left( aa_{1}+a_{2}\right)\,, \\
	C &:=\left( ab+c\right) a_{3}+b\left( aa_{3}+a_{4}\right) +b (a_{1}-2a_{5})\,, \\
	D &:= aa_{3}-a_{1}+a_{5}\,, \\
	E &:=ba_{3}\,, \\ 
	F &:=aa_{3}-a_{1}\,,\\
	M &:= 2 (k_1 - k_4 a)\,, \quad \tilde M := M - 2k_8\,,\\
	N &:= 2 (k_2 - k_6 a)\,, \quad \tilde N := N - 2k_9\,. 
	\end{split}
	\end{align}
	as well as:
	\begin{align}
	u = \dot t - a \dot r,\quad v = c \dot r^2 - 2 b \dot t\dot r - w^2\,.
	\end{align}
	
\end{itemize}

\begin{theorem}[The case $\delta_w\neq0$]:\label{thm:main}
	Let $(M,L = L(t,r,\dot t, \dot r, w))$ be an $SO(3)$-spatially symmetric 4-dimensional pseudo-Finsler space and let $\Gamma$ be a spherically symmetric affine connection on $M$, with connection coefficients as in \eqref{eq:ac1} -\eqref{eq:ac4} and curvature coefficients $a_i$, \eqref{a_i}. Assume $k_{10} \neq 0$. Then: \\
	I. If $(M,L)$ is of Berwald type and non-pseudo-Riemannian, with  canonical connection $\Gamma$, then $\Gamma$ must satisfy:
	\begin{enumerate}[a)]
		\item $k_{11}=k_{12}=0$.
		\item $[\delta_t, \delta_w]$ and $[\delta_r, \delta_w]$ are both proportional to $\delta_w$ and $[\delta_t,[\delta_t,\delta_r]], [\delta_r,[\delta_t,\delta_r]]$ are both proportional to $[\delta_t,\delta_r]$.
		\item $A=B=C=0$.
	\end{enumerate}	
	II. Assume $\Gamma$ satisfies all the conditions above. Then, $L$ is of Berwald type, admitting $\Gamma$ as its canonical connection, in precisely one of the following situations:
		\begin{enumerate}[1.]
			\item The vector fields $[\delta_t, \delta_r]$ and $\delta_w$ are not proportional (that is, $D,E,F$ are not all zero) and:
			\begin{enumerate}[i.)]
				\item $D\neq 0$ (which leads to $\lambda = F/D = const.$). In this case, $L$ is given by:
				\begin{align}\label{eq:1-L2}
					L = \vartheta(t,r) u^{2-2 \lambda}\left(v + \rho u\right)^\lambda\,, 
				\end{align}
				with $\vartheta=\vartheta(t,r)$ and $\rho=\rho(t,r)$ satisfying
				\begin{align}
					\rho = \frac{D}{E}\,, \quad \vartheta(t,r) &= e^{\int (M - \lambda \tilde M)dt} =  e^{\int (N - \lambda \tilde N)dr}\,.
				\end{align}
				\item $D= 0$, $E\neq 0$. In this case:
				\begin{align}\label{eq:1-L3}
					L = \varphi(t,r) u^2 e^{\frac{v}{u^2}\mu}\,, 
				\end{align}
				with $\mu = F/E$ and $\varphi$ determined by
 				\begin{align}\label{eq:phithm}
					\varphi = e^{\int dt (M + 2 k_4 b \mu)} = e^{\int dr (N + 2 k_6 b \mu)}\,.
				\end{align}
			\end{enumerate}
		\item $[\delta_t, \delta_r] \sim \delta_w$ (that is $D=E=F=0$) and:
		\begin{enumerate}[i.)]
			\item $b=c=0$. In this case, $L$ is of the form
			\begin{align}\label{eq:1-L4}
				L = u^2 \Xi = -w^2 \xi(q)\,,\quad q= - \frac{w^2}{u^2}e^{-f(t,r)}
			\end{align}
			with $f$ determined by integration along an arbitrary curve $C$ connecting some initial point $(t_0,r_0)$ to $(t,r)$ of
			\begin{align}
				f = \int_C (M dt + N dr)\,.
			\end{align}
			\item $b,c$ - not both zero, $[\delta_t, \delta_r]=0$. In this case, there exists a coordinate change $(t,r)\mapsto (\tilde t,\tilde{r})$ such that, in the new coordinates, $L=u^2\Xi(z)$ is independent of $(\tilde t,\tilde{r})$:
			\begin{align}\label{eq:1-L5}
				L = u(\dot {\tilde  t}, \dot {\tilde r}, w)^2 \Xi(z(\dot {\tilde  t}, \dot {\tilde r}, w))\,, \quad z(\dot {\tilde  t}, \dot {\tilde r}, w) = \frac{v(\dot {\tilde  t}, \dot {\tilde r}, w)}{u(\dot {\tilde  t}, \dot {\tilde r}, w)^2}\,,
			\end{align}
			for an arbitrary $\Xi=\Xi(z)$ independent of $\tilde t$ and $\tilde r$.
			\end{enumerate}
		\end{enumerate}

\end{theorem}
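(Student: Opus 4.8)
The plan is to invoke Theorem~\ref{thm:berw}: $L$ is of Berwald type with canonical connection $\Gamma$ precisely when the linear first-order system \eqref{eq:deltt}--\eqref{eq:homL} holds, and $k_{11}=k_{12}=0$ (Part~I.a) is already the content of the Lemma above. So the whole statement reduces to analysing the solvability and the general solution of \eqref{eq:deltt}--\eqref{eq:homL}, which I would do by the method of characteristics, organised by the Lie-bracket algebra of $\delta_t,\delta_r,\delta_w$. First I would solve the purely fibrewise equation \eqref{eq:deltaw}, which contains no $\partial_t,\partial_r$. Since $k_{10}\neq 0$, the function $u=\dot t-a\dot r$ is a first integral of $\delta_w$ and, together with $v$ from \eqref{A-F_defdef}, completes a full set of $\delta_w$-invariants on the generic part of each fibre; hence $z:=v/u^{2}$ is $0$-homogeneous and $\delta_w$-invariant. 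Combining \eqref{eq:deltaw} with the homogeneity \eqref{eq:homL} then gives, on the region $u\neq0$,
\begin{equation}
L=u^{2}\,g(t,r,z),
\end{equation}
with $g$ an a priori arbitrary smooth function. Here non-degeneracy of $g_{ab}$ forces $\partial_z g\not\equiv0$ and non-pseudo-Riemannianity forces $g$ non-affine in $z$; both will be used to divide below. The locus $u=0$ and the degeneration $k_7=k_8=k_9=k_{10}=0$ (the subject of Theorem~\ref{thm:main2}) need separate bookkeeping.

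Next I would substitute $L=u^{2}g(t,r,z)$ into \eqref{eq:deltt}--\eqref{eq:deltr} and use \eqref{eq:ac1}--\eqref{eq:ac4}; both equations then collapse to transport equations
\begin{align}
\partial_t g+P_t(t,r,z)\,\partial_z g+Q_t(t,r,z)\,g&=0,\\
\partial_r g+P_r(t,r,z)\,\partial_z g+Q_r(t,r,z)\,g&=0,
\end{align}
in which $P_t$ records $\delta_t z$ and $Q_t$ records the action of $\delta_t$ on the prefactor $u^{2}$ (similarly for $r$), re-expressed through $(t,r,z)$; their coefficients are assembled from the $k_i$ and their first $(t,r)$-derivatives, i.e.\ from the curvature quantities $a_i$ of \eqref{a_i} --- and the quantities $M,\tilde M,N,\tilde N$ of \eqref{A-F_defdef} enter here. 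That these reductions make sense is exactly the statement that $[\delta_t,\delta_w]$ and $[\delta_r,\delta_w]$ are proportional to $\delta_w$: indeed $[\delta_t,\delta_w]$ is a purely vertical field annihilating $L$, and for $L=u^{2}g$ with $\partial_z g\neq0$ the only such field is a multiple of $\delta_w$; this forces Part~I.b. Imposing $\partial_r\partial_t g=\partial_t\partial_r g$ on the pair above, together with the demand that the exponents arising in the next step be constant, forces in addition $[\delta_t,[\delta_t,\delta_r]]$ and $[\delta_r,[\delta_t,\delta_r]]$ to be proportional to $[\delta_t,\delta_r]$, while compatibility of integrating $Q_t$ in $t$ with integrating $Q_r$ in $r$ is exactly $A=B=C=0$ (Part~I.c). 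This is the necessity half, Part~I; Part~II is the same computation run forward under the hypotheses a), b), c).

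The case split in Part~II is then governed by whether $[\delta_t,\delta_r]$ is proportional to $\delta_w$. If it is not --- i.e.\ $D,E,F$ are not all zero --- the $z$-characteristic equation $\dot z=P_t(z)$ is of Bernoulli type with leading behaviour controlled by $D$ and $E$: for $D\neq0$ it integrates to a power law in a shift of $v/u^{2}$ with exponent $\lambda=F/D$, constant by the double-bracket condition, producing \eqref{eq:1-L2}; for $D=0$, $E\neq0$ it integrates to an exponential in $v/u^{2}$ with rate $\mu=F/E$, producing \eqref{eq:1-L3}; in both cases the $(t,r)$-prefactors ($\vartheta,\rho$, resp.\ $\varphi$) come from integrating $Q_t,Q_r$, and the $t$- and $r$-integrations agree exactly because $A=B=C=0$. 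If instead $[\delta_t,\delta_r]\sim\delta_w$ --- i.e.\ $D=E=F=0$ --- then $\langle\delta_t,\delta_r,\delta_w\rangle$ is an integrable distribution and $g$ may depend on $z$ arbitrarily: for $b=c=0$ the $(t,r)$-transport integrates to the function $f$ of \eqref{eq:1-L4}, while for $b,c$ not both zero with $[\delta_t,\delta_r]=0$ the two commuting fields $\delta_t,\delta_r$ are simultaneously straightened by a coordinate change $(t,r)\mapsto(\tilde t,\tilde r)$, removing the base dependence and yielding \eqref{eq:1-L5}. Exhaustiveness and mutual exclusivity of the listed cases follow from this trichotomy and the sub-split inside the last case, carried out on $u\neq0$ and patched with the $u=0$ analysis.

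The skeleton is short, but I expect the real work --- and the main obstacle --- to be the explicit algebra: computing $\delta_t u,\delta_t z,\delta_r u,\delta_r z$ and the brackets $[\delta_t,\delta_w],[\delta_r,\delta_w],[\delta_t,\delta_r]$ in terms of the $a_i$, and then checking that the integrability conditions of this overdetermined system are \emph{precisely} the stated bracket identities together with $A=B=C=0$, neither weaker nor stronger. A second, more delicate point is bookkeeping over the degenerate loci: the set $u=0$, the vanishing of individual $k_i$ (which is what separates the sub-cases, in particular $b=c=0$ and $[\delta_t,\delta_r]=0$), and the coordinate change in case~2.ii, all to be handled while keeping $g_{ab}$ non-degenerate so that no solution family is spuriously created or lost.
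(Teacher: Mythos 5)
Your plan follows the paper's own strategy almost step for step: reduce to the first-order system via Theorem~\ref{thm:berw}, solve \eqref{eq:deltaw} together with homogeneity to get $L=u^{2}\Xi(t,r,z)$ with $z=v/u^{2}$, use the iterated Lie brackets as consistency conditions to fix the $z$-dependence (power law, exponential, or arbitrary), and finally integrate the $t,r$-transport to obtain $\vartheta$, $\varphi$, $f$, or the straightening coordinates. The case trichotomy you describe is exactly the paper's.

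However, what you present is a skeleton, and the content of the theorem lives precisely in the computations you defer. Two points in particular are not routine and are not quite captured by your description. First, $B=C=0$ (and likewise the reduction of \eqref{eq:XiDwNonTriv} to an identity) do \emph{not} follow from cross-derivative compatibility of the $t$- and $r$-transport equations alone: the bracket condition $[\delta_t,\delta_r]L=0$, decomposed by powers of $\dot r$ and $u$, gives $A=0$ outright, but for $B\neq0$ the equation $(Bz+C)\partial_z\Xi-B\Xi=0$ still has the solution $\Xi=\varphi(t,r)(Bz+C)$ --- which is quadratic in $\dot x$. So $B=C=0$ is forced by \emph{non-pseudo-Riemannianity}, not by integrability; your phrasing ("compatibility of integrating $Q_t$ in $t$ with integrating $Q_r$ in $r$ is exactly $A=B=C=0$") misattributes the mechanism. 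Second, the sufficiency half (Part II) requires verifying that, under conditions a)--c), the remaining equations $R_1=R_2=0$ are \emph{automatically} consistent: that $\lambda=F/D$ is constant, that $\rho=E/D$ solves \eqref{rho_power}, and that the two integral expressions for $\vartheta$ (resp.\ $\varphi$) agree. In the paper these reduce to nontrivial identities among the $a_i$, $A_i$, $B_i$ which hold only by combining Lemmas \ref{lem:wlie}, \ref{lem:trLie} and \ref{lem:ttrLie}; without exhibiting these identities one has not shown that the listed $L$'s actually solve $\delta_tL=\delta_rL=0$, i.e.\ Part II remains unproven. You correctly identify this as "the main obstacle," but identifying it is not the same as overcoming it.
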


\bigskip

\textbf{Remark.} In the beginning of the theorem we assumed that $k_{10}\neq0$. If, instead, we have $k_7\neq0$ and $k_{10}=0$, a completely analogous theorem holds with the roles of $\dot t$ and $\dot r$ interchanged. The case when $k_{7}$ and $k_{10}$ are both zero implies that equation \eqref{eq:deltaw} is trivially satisfied and is discussed below.

\bigskip

\begin{theorem}[The case $\delta_w=0$]:\label{thm:main2}
	Let $(M,L = L(t,r,\dot t, \dot r, w))$ be an $SO(3)$-spatially symmetric pseudo-Finsler space and $\Gamma$, a spatially spherically symmetric affine connection on $M$, with connection coefficients \eqref{eq:ac1}- \eqref{eq:ac4} and curvature coefficients \eqref{a_i}. Assume $k_7=k_8=k_9=k_{10}=k_{11}=k_{12}=0$.
	
	Then: $(M,L)$ is of Berwald type and non-pseudo-Riemannian, with canonical connection given by $\Gamma$, if and only if one of the following conditions holds:
	\begin{enumerate}[1.]
		\item $[\delta_t, \delta_r] = 0$. In this case, up to a possible coordinate change $(t,r)\rightarrow (\tilde{t},\tilde{r})$, $L$ is an arbitrary 2-homogeneous function of $\dot{t},\dot{r},\dot{w}$ only:		\begin{align}\label{eq:2-L1}
			L = L(\dot t, \dot r, w) = \dot t^2 L(1, p, s)\,, \quad p = \frac{\dot r}{\dot t},\quad s = \frac{w}{\dot t}\,.
		\end{align}
	
		\item $[\delta_t, \delta_r]\neq0$,  $[\delta_t, [\delta_t, \delta_r]]\sim [\delta_t, \delta_r]$, $[\delta_r, [\delta_t, \delta_r]]\sim [\delta_t, \delta_r]$; in this case,
		\begin{align}\label{eq:2-L2}
				L =  w^2 \xi(q)\,,\quad q = \frac{\dot t e^{I-\varphi}}{w}\,,
		\end{align}
		with
		\begin{align}
			\varphi = \int_C (K dt + T dr), \quad I =\int \frac{(a_1+a_2 p)}{( a_2 p^2 - (a_4-a_1)p -a_3 )} dp\,
		\end{align}
 where, in the first case, integration is taken along an arbitrary curve $C$ connecting some initial point $(t_0,r_0)$ to $(t,r)$, and 
		\begin{align}
			K &= \partial_t I - (k_1 + k_2 p) + (k_1 p + k_2 p^2 - k_4 - k_6 p)\partial_p I\,,\\
			T &= \partial_r I - (k_2 + k_3 p) + (k_2 p + k_3 p^2 - k_6 - k_4 p)\partial_p I\,.
		\end{align}		
	\end{enumerate}

\end{theorem}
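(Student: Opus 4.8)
\textbf{Proof proposal for Theorem \ref{thm:main2}.}

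The plan is to specialize the general machinery of Theorem \ref{thm:main} to the degenerate branch in which the extra vector field $\delta_w$ vanishes identically. Under the hypothesis $k_7=k_8=k_9=k_{10}=0$ (and $k_{11}=k_{12}=0$, which we already know is forced), equation \eqref{eq:deltaw} becomes the trivial identity $0=0$, so the Berwald system \eqref{eq:deltt}--\eqref{eq:homL} collapses to the three equations $\delta_t L=0$, $\delta_r L=0$, $2L=\dot t\dot\partial_t L+\dot r\dot\partial_r L+w\partial_w L$, where now $\delta_t=\partial_t-(k_1\dot t+k_2\dot r)\dot\partial_t-(k_4\dot t+k_6\dot r)\dot\partial_r$ and $\delta_r=\partial_r-(k_2\dot t+k_3\dot r)\dot\partial_t-(k_6\dot t+k_5\dot r)\dot\partial_r$. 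The key structural observation is that $\delta_t,\delta_r$ act only on $(t,r,\dot t,\dot r)$ and not on $w$; hence $w$ (equivalently, any function of $w$ alone) is automatically annihilated, and the system for $L$ is essentially a first-order linear PDE system in the four variables $(t,r,\dot t,\dot r)$ with $w$ as a passive parameter, constrained by $2$-homogeneity in $(\dot t,\dot r,w)$ jointly.

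First I would compute the commutator $[\delta_t,\delta_r]$; because both fields have no $\partial_w$-component, the bracket is a vertical vector field of the form $[\delta_t,\delta_r]=P\dot\partial_t+Q\dot\partial_r$ with $P,Q$ linear in $(\dot t,\dot r)$ and coefficients built from the $k_I$ and their first derivatives — these are exactly the curvature coefficients $a_i$ of \eqref{a_i}. The Frobenius-type consistency analysis of the overdetermined system then dichotomizes on whether $[\delta_t,\delta_r]=0$ or not. In case $[\delta_t,\delta_r]=0$, the distribution spanned by $\delta_t,\delta_r$ (together with the trivially-present directions) is integrable with a two-dimensional leaf space; I would invoke the standard flat-connection normal form — i.e. a coordinate change $(t,r)\mapsto(\tilde t,\tilde r)$ straightening $\delta_t,\delta_r$ into $\partial_{\tilde t},\partial_{\tilde r}$ up to the fibre action, which forces $L$ to be independent of $(\tilde t,\tilde r)$ and hence an arbitrary $2$-homogeneous function of $(\dot t,\dot r,w)$, giving \eqref{eq:2-L1} after dividing out $\dot t^2$ and setting $p=\dot r/\dot t$, $s=w/\dot t$. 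Conversely any such $L$ solves the system, so this branch is completely characterized.

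In case $[\delta_t,\delta_r]\neq0$, the three vector fields $\delta_t,\delta_r,[\delta_t,\delta_r]$ can span at most the two-dimensional space of vertical directions (in $\dot\partial_t,\dot\partial_r$) plus the horizontal ones, so further brackets must close up; the conditions $[\delta_t,[\delta_t,\delta_r]]\sim[\delta_t,\delta_r]$ and $[\delta_r,[\delta_t,\delta_r]]\sim[\delta_t,\delta_r]$ are precisely the integrability conditions that make the system consistent and reduce the number of functional degrees of freedom of $L$ to a single function of one variable. Concretely I would: (i) use $[\delta_t,\delta_r]L=0$ to obtain a first-order ODE in the fibre variable $p=\dot r/\dot t$ whose solution introduces the integrating factor $I=\int (a_1+a_2p)/(a_2p^2-(a_4-a_1)p-a_3)\,dp$, so that $L$ depends on $(t,r,p)$ only through the combination $\dot t e^{I}$ up to a $(t,r)$-prefactor; (ii) feed this ansatz back into $\delta_tL=0$ and $\delta_rL=0$ to get that the $(t,r)$-prefactor satisfies $\partial_t(\log\cdot)=K$, $\partial_r(\log\cdot)=T$ with $K,T$ as displayed, the mixed-partial compatibility $\partial_rK=\partial_tK$ being guaranteed exactly by the two bracket-proportionality hypotheses; (iii) integrate to get $\varphi=\int_C(K\,dt+T\,dr)$, and finally impose $2$-homogeneity, which pins the overall structure to $L=w^2\xi(q)$ with $q=\dot t e^{I-\varphi}/w$ and $\xi$ arbitrary, as in \eqref{eq:2-L2}. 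Checking that each such $L$ is non-degenerate (so $g$ is a genuine $L$-metric) and genuinely non-pseudo-Riemannian (i.e. $\xi$ is not such that $L$ is quadratic) completes the equivalence.

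The main obstacle I anticipate is step (ii)--(iii) of the second case: verifying that the two Lie-bracket proportionality conditions are \emph{exactly} equivalent to the solvability of the linear system for $\varphi$ (i.e. $\partial_r K=\partial_t T$ as an identity in $(t,r,p)$, after substituting the $p$-dependence fixed in step (i)), rather than merely necessary. This requires a careful — but in principle routine — computation of how $\delta_t$ and $\delta_r$ act on the combination $\dot t e^{I}$, using the explicit $p$-ODE satisfied by $I$; the homogeneity constraint must be tracked in parallel, since it is what converts an a priori function of two fibre-variables into the single-variable profile $\xi(q)$. A secondary, more bookkeeping-level obstacle is ensuring the claimed normal-form coordinate change in case~1 (and the analogous one in Theorem \ref{thm:main}, part 2.ii) is globally consistent on the chart $U$ and compatible with the residual $SO(3)$-invariant form \eqref{eq:sphL}; here one leans on the fact that $\delta_t,\delta_r$ are $\theta,\phi$-independent, so the straightening happens purely in the $(t,r)$-plane and does not disturb the $w$-dependence.
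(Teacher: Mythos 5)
Your proposal follows essentially the same route as the paper: the same dichotomy on $[\delta_t,\delta_r]=0$ versus $\neq 0$, the straightening of $\delta_t,\delta_r$ via Frobenius in the first case, and in the second case the derivation of the double-bracket proportionality as the consistency condition for the overdetermined algebraic system in $\dot\partial_t L,\dot\partial_r L$, followed by solving $[\delta_t,\delta_r]L=0$ through the integrating factor $I(p)$, feeding the ansatz into the $\delta_t,\delta_r$ equations to obtain $K,T$, and closing with $2$-homogeneity to get $L=w^2\xi(q)$. The obstacle you flag — checking that the bracket conditions imply $\partial_p K=\partial_p T=0$ and $\partial_r K=\partial_t T$ identically — is exactly the verification the paper carries out by direct inspection using the explicit expressions of $a_i, A_i, B_i$.
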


The proofs of these two theorems require to discuss numerous different cases, which makes them quite lengthy. Some remarks which should guide the reader through the proof:
\begin{itemize}
	\item Theorem \ref{thm:main}, point I:
	\begin{itemize}
		\item[a)] The necessity of this condition was proven above, in \eqref{eq:k11k12vanish}.
		\item[b)] will appear as necessary condition in Lemma \ref{lem:wlie}.
		\item[c)] will appear as necessary condition from Lemma \ref{lem:trLie}.
	\end{itemize}
	\item Theorem \ref{thm:main}, point II:
		The explicit expression for $L$, hence sufficient conditions, are derived in:
		\begin{itemize}
		\item[1.i.)]  Conclusion \ref{conc:pl}.
		\item[1.ii.)] Conclusion \ref{conc:exp}.
		\item[2.i.)]  Conclusion \ref{conc:XI1}.
		\item[2.ii.)] Conclusion \ref{conc:XI2}.
		\end{itemize}
	\item Theorem \ref{thm:main2}: 
	\begin{itemize}
		\item[1.] is derived in Conclusion  \ref{conc:notr}.
		\item[2.] is derived in Conclusion  \ref{conc:xi3}.
	\end{itemize}
\end{itemize}
Finally, we did only refer to non-Riemannian (non-quadratic) solutions, since it is well known that a quadratic Finsler function leads to geometry of Berwald type.

\section{Evaluating the Berwald condition: Proof of Theorems \ref{thm:main} and \ref{thm:main2}}\label{sec:mainpre}
Theorems \ref{thm:main} and \ref{thm:main2} will be proven by finding all possible $SO(3)$-symmetric solutions of the Berwald condition equations \eqref{eq:deltt}-\eqref{eq:homL}, for arbitrary $SO(3)$-spherically symmetric affine connections on $M$. According to Theorem \ref{thm:berw}, these are all possible Berwald-type pseudo-Finsler functions on our manifold, possessing the said symmetry.
The strategy to solve \eqref{eq:deltt} - \eqref{eq:homL} is as follows:
\begin{itemize}
	\item First, we note that there exist necessary consistency conditions, given by the vanishing of the action of the iterated Lie brackets $[\delta_i,\delta_j]L = 0, [\delta_k,[\delta_i,\delta_j]]L = 0,...$, $i,j,k \in \{t,r,w\}$. These Lie brackets are easily expressed in terms of the curvature and derivatives of the curvature of the connection.
	
	\item Second, we observe that: the equation $\delta_w L=0$, the homogeneity condition and the consistency conditions $[\delta_i,\delta_j]L = 0, [\delta_k,[\delta_i,\delta_j]]L = 0,...$ only involve the $\dot{t},\dot{r}$- and $w$-derivatives of $L$, hence we will use these equations to determine the dependence of $L$ on these three variables - accordingly, the $\dot{x^a}$-dependence of $L$. 
	
	\item Finally, we substitute the solutions of the above equations into the $\delta_t$ and $\delta_r$ equations \eqref{eq:deltt} and  \eqref{eq:deltr}, to determine the $t$ and $r$ dependence of $L$.
\end{itemize}
We list below the first five such iterated Lie bracket conditions (we will see below that these are actually sufficient to completely determine the $\dot{x}$-dependence of $L$, in any of the cases listed at the end of the previous section):
\begin{align}
 [\delta_t, \delta_r]L &= (a_1 \dot t + a_2 \dot r)\dot\partial_t L + (a_3 \dot t + a_4 \dot r)\dot\partial_r L + a_5 w \partial_w L = 0\,, \label{eq:curvtr}\\
 [\delta_w, \delta_t]L &= a_6 w \dot\partial_t L + a_7 w \dot\partial_r L + (a_8 \dot t + a_9 \dot r) \partial_w L = 0\,,\label{eq:curvtw}\\
 [\delta_w, \delta_r]L &= a_{10} w \dot\partial_t L + a_{11} w \dot\partial_r L + (a_{12} \dot t + a_{13} \dot r) \partial_w L = 0\,,\label{eq:curvrw}\\
 [\delta_t, [\delta_t, \delta_r]]L &= (A_1 \dot t + A_2 \dot r)\dot\partial_t L + (A_3 \dot t + A_4 \dot r)\dot\partial_r L + A_5 w \partial_w L = 0\,,\label{eq:curvttr}\\
 [\delta_r, [\delta_t, \delta_r]]L &= (B_1 \dot t + B_2 \dot r)\dot\partial_t L + (B_3 \dot t + B_4 \dot r)\dot\partial_r L + B_5 w \partial_w L = 0\label{eq:currtr}\,.
\end{align}
In the above, the coefficients $a_i$ are nothing but the curvature components of the nonlinear connection $N$, given by the decomposition $[\delta_{a},\delta_{b}]= R^{c}{}_{ab}\dot{\partial}_{c},  a,b,c \in \{t,r,\theta,\phi\}$; this can be easily seen as, using $k_{11}=k_{12}=0$ and \eqref{eq:rel_theta_w}, one gets: $[\delta_{t},\delta_{\theta}]= -\frac{\dot{\theta}}{w}\delta_{w}$. \\
The coefficients $A_i,B_i$ contain the first order partial derivatives of $a_i$; the explicit expressions of $a_i, A_i$ and $B_i$ (which are, ultimately, functions of $t$ and $r$), are displayed in terms of the connection coefficients and their derivatives in Appendix~\ref{app:curv}.

\bigskip

To summarize, we now have four original equations (\eqref{eq:deltt}- \eqref{eq:homL}) and five additional constraints (\eqref{eq:curvtr} to \eqref{eq:currtr}) which we want to solve. We will start by solving the "$\dot x$-system" consisting of the seven equations \eqref{eq:deltaw}, \eqref{eq:homL}, \eqref{eq:curvtr}-\eqref{eq:currtr}, since these equations do not involve any $t$ or $r$-derivatives. They can be treated as an \textit{algebraic} system for $\dot \partial_tL$, $\dot \partial_rL$ and $\partial_wL$. The latter implies the following remarks:

\begin{itemize}
	\item Solutions of these equations which require $\dot \partial_t L = 0$, $\dot \partial_r L = 0$ or $ \partial_w L = 0$ must be discarded, since they cannot lead to a non-degenerate Finslerian metric tensor. Hence, at most \textit{two} of the six $\dot x$-equations (\eqref{eq:deltaw}, \eqref{eq:curvtr} to \eqref{eq:currtr}) (that are homogeneous in these derivatives) can be linearly independent.
	
	In particular, if the $w$-equation $\eqref{eq:deltaw}$ is nontrivial, then, at most one of the curvature constraints \eqref{eq:curvtr} to \eqref{eq:currtr} can be independent of it.

	\item On the other hand, if the $w$-equation \eqref{eq:deltaw} is trivial, i.e. if $k_{7}=k_{8}=k_{9}=k_{10}=0$, then the curvature coefficients $a_5$ to $a_{13}$ vanish identically and only the equations \eqref{eq:curvtr}, \eqref{eq:curvttr} and \eqref{eq:currtr} can still be nontrivial.
\end{itemize}

Following the last remark, we distinguish two major cases: 
\begin{itemize}
	\item the $w$-equation $\eqref{eq:deltaw}$ is nontrivial, that is, $\delta_w \neq 0$ (whose solution is presented in Theorem \ref{thm:main});
	\item the $w$-equation $\eqref{eq:deltaw}$ is trivial, $\delta_w = 0$, presented in Theorem \ref{thm:main2}.
\end{itemize}
In the following, let us investigate separately these two cases.

\subsection{Case 1: nontrivial $w$-equation}\label{sec:mainCase1}
For this case we first solve \eqref{eq:deltaw} and \eqref{eq:homL}, before we work ourselves through the additional constraint equations.

\subsubsection{2-homogeneous solutions of the  $w$-equation \eqref{eq:deltaw}}
Since we exclude the case that all of the derivatives $\dot \partial_a L = 0$ can be zero, as discussed above, we assume that at least one of the connection coefficients $k_{7}$ or $k_{10}$ is nonzero. To fix things, we assume $k_{10}\neq0$. By the method of characteristics, we find that the general solution of  \eqref{eq:deltaw} is a free function of the variables $t,r,u,v$, where: 
\begin{align}
		u = \dot t - a \dot r, \quad v = c \dot r^2 - 2 b \dot t \dot r - w^2\,,
\end{align}
and
\begin{align}\label{eq:abc}
	a = \frac{k_7}{k_{10}},\quad b = \frac{k_8}{k_{10}},\quad c = \frac{k_9 k_{10} - k_7 k_8}{k_{10}^2}\,.
\end{align}
Moreover using the 2-homogeneity of $L$, \eqref{eq:homL}, we find
\begin{align}\label{eq:wsol1}
		L = u^2 \Xi(t,r,z),\quad z:=\frac{v}{u^2}\,.
\end{align}

\bigskip

In the particular case when $k_{7}=0$, this gives
\begin{align}\label{eq:wsol2}
L = \dot t^2 \Xi(t,r,z),\quad z:=\frac{v}{\dot t^2}\,.
\end{align}
This way, in the case when $k_{10}=0$, we obtain in a completely similar way (interchanging the roles of $t$ and $r$, $\dot t$ and $\dot r$ as well as $k_8$ and $k_9$).

Thus, we have solved the nontrivial necessary equation \eqref{eq:deltaw}  for $L$ (defining a Finsler function of Berwald type) and can now continue with the further constraints.

\subsubsection{Solving the Lie bracket equations involving $\delta_w$}
Next, we substitute the solutions of equations \eqref{eq:deltaw}- \eqref{eq:homL}, which we just derived, into the consistency conditions \eqref{eq:curvtw} and \eqref{eq:curvrw}, thus finding further necessary conditions for the existence of nontrivial solutions of the original system.

\begin{lemma}\label{lem:wlie}
	If  \eqref{eq:deltaw} is non trivial, then a nontrivially Finslerian (non-Riemannian) solution $L$ of the Berwald conditions \eqref{eq:deltt}-\eqref{eq:deltr} can only exist if
	\begin{align}\label{eq:twrw=w}
		[\delta_t, \delta_w] = \alpha \delta_w,\quad  [\delta_r, \delta_w] = \beta \delta_w
	\end{align}
	for some functions $\alpha=\alpha(t,r)$ and $\beta=\beta(t,r)$.
\end{lemma}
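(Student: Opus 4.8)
The plan is to substitute the already-known solution of the $w$-equation into the two curvature consistency conditions \eqref{eq:curvtw}--\eqref{eq:curvrw} and show that, once one insists on a non-degenerate and non-pseudo-Riemannian $L$, these conditions collapse to the proportionality statement \eqref{eq:twrw=w}. Heuristically one expects such a collapse because $\delta_w$, $[\delta_w,\delta_t]$ and $[\delta_w,\delta_r]$ are three ``vertical-type'' vector fields all annihilating $L$: if they were linearly independent at a generic point, then $\dot\partial_t L$, $\dot\partial_r L$, $\partial_w L$ would all vanish there, hence $L=0$ on an open set by $2$-homogeneity, which is impossible; so they span a space of dimension at most $2$. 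Promoting ``dimension $\le 2$'' to genuine proportionality with $\delta_w$ is exactly the point at which the explicit form of the non-Riemannian solution is needed.

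Recall that solving \eqref{eq:deltaw} together with the $2$-homogeneity \eqref{eq:homL} gave, under the standing assumption $k_{10}\neq0$, the form $L=u^2\,\Xi(t,r,z)$ with $z=v/u^2$, cf.\ \eqref{eq:wsol1} (and $L=\dot t^2\,\Xi(t,r,z)$ when $k_7=0$, cf.\ \eqref{eq:wsol2}). Two remarks single out the admissible $\Xi$: if $\partial_z\Xi\equiv0$, then $L=u^2\,\Xi(t,r)$ is a $w$-independent, rank-one quadratic form in $\dot x$ whose $L$-metric is degenerate, so necessarily $\partial_z\Xi\not\equiv0$; and if $\Xi$ is affine-linear in $z$, then, since $u^2z=v$ and $u^2$ are both quadratic in $\dot x$, $L$ is quadratic, i.e.\ pseudo-Riemannian. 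Hence for a non-degenerate, non-pseudo-Riemannian $L$ we may assume $\partial_z\Xi\not\equiv0$ and that $\Xi$ is not affine-linear in $z$.

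Next I would express $\dot\partial_t L$, $\dot\partial_r L$, $\partial_w L$ through $\Xi$ and $\partial_z\Xi$ and substitute them into \eqref{eq:curvtw}; by $2$-homogeneity, the outcome has the shape $P(t,r,\dot x)\,\Xi+Q(t,r,\dot x)\,\partial_z\Xi=0$ with $P,Q$ polynomial in $\dot x$ and built from the curvature coefficients $a_6,\dots,a_9$ and from $a,b,c$ of \eqref{eq:abc}. The decisive observation is that the fibre variables $(\dot t,\dot r,w)$ may be traded for $(u,p,w)$, $p:=\dot r/u$, with $z$ then depending on $\dot x$ through $p$ and $w/u$ only; in particular $p$ is independent of $z$, whereas $\Xi$ and $\partial_z\Xi$ are functions of $z$ alone. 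Pulling out the common factors $w$ and $u$, the identity reduces to $\sigma\,\Xi+(\kappa-\sigma z+p\,K)\,\partial_z\Xi=0$ with $\sigma,\kappa,K$ functions of $(t,r)$. Being linear in $p$ with $\partial_z\Xi\not\equiv0$, this forces $K=0$; the remaining ODE $\sigma\,\Xi=(\sigma z-\kappa)\,\partial_z\Xi$ would, if $\sigma\neq0$, integrate to an $\Xi$ affine-linear in $z$, which is excluded, so $\sigma=0$, whence the ODE gives $\kappa=0$ as well. Running the same computation with \eqref{eq:curvrw} in place of \eqref{eq:curvtw} produces the analogous vanishings among $a_{10},\dots,a_{13}$.

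It then remains to observe that the accumulated relations say exactly that the coefficient tuples $(a_6,a_7,a_8,a_9)$ and $(a_{10},a_{11},a_{12},a_{13})$ --- which represent $[\delta_w,\delta_t]$ and $[\delta_w,\delta_r]$ in the frame $\{w\dot\partial_t,\,w\dot\partial_r,\,\dot t\,\partial_w,\,\dot r\,\partial_w\}$ --- are pointwise proportional to the tuple $(k_7,k_{10},k_8,k_9)$ representing $\delta_w$, with proportionality factors (ratios of curvature to connection coefficients) depending on $t,r$ only. Writing these factors as $-\alpha(t,r)$ and $-\beta(t,r)$ and using $[\delta_t,\delta_w]=-[\delta_w,\delta_t]$ gives $[\delta_t,\delta_w]=\alpha\,\delta_w$ and $[\delta_r,\delta_w]=\beta\,\delta_w$, which is \eqref{eq:twrw=w}. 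The mirror sub-case $k_7\neq0$, $k_{10}=0$ (where $L=\dot t^2\,\Xi$) is handled identically with the roles of $\dot t$ and $\dot r$, hence of $k_8$ and $k_9$, interchanged. I expect the real work to sit in the penultimate step: performing the substitution cleanly, choosing fibre variables so that ``independent of $z$'' is unambiguous, keeping track of which curvature coefficients get killed, and then carrying out the linear-algebra identification of these constraints with the Lie-bracket proportionality \eqref{eq:twrw=w}.
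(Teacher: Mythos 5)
Your proposal is correct and follows essentially the same route as the paper: substitute the solved form $L=u^2\Xi(t,r,z)$ into \eqref{eq:curvtw}--\eqref{eq:curvrw}, split the resulting identity according to the fibre variables ($\dot r$ and $u$ in the paper, your $p=\dot r/u$ after dividing by $u$), discard the degenerate case $\partial_z\Xi\equiv 0$ and the affine-in-$z$ (pseudo-Riemannian) case to force the coefficients $\sigma,\kappa,K$ to vanish, and read off the proportionality of $(a_6,\dots,a_9)$ and $(a_{10},\dots,a_{13})$ to $(k_7,k_{10},k_8,k_9)$. Your explicit handling of the sign from $[\delta_t,\delta_w]=-[\delta_w,\delta_t]$ is in fact cleaner than the paper's final line.
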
\vspace{-5pt}

\begin{proof}[Proof of Lemma \ref{lem:wlie}]
	Starting from the solution  \eqref{eq:wsol1} which we found for the $\delta_w$ equation and using the variables $(t,r,\dot r, u, z)$, we can rewrite the $\dot x$-derivatives of $L$ as
	\begin{align}\label{eq:newvar}
	\begin{split}
		\dot\partial_t L &= 2 ( \dot r b - u z )\partial_z \Xi + 2 u \Xi\,,\\
		\dot\partial_r L &= 2 ( \dot r (a b + c) + u (a z +b) )\partial_z \Xi - 2 a u \Xi\,,\\
		w \partial_w L  &= -2 (\dot r^2 (2 ab + c) + 2 u \dot r b - u^2 z)\partial_z \Xi\,.
	\end{split}
	\end{align}
	Substituting the above relations into the $tw$-Lie bracket constraint \eqref{eq:curvtw}, we find:
	\begin{align}\label{eq:twXi}
		\dot{r}\left[ \left( a_{6}b+a_{7}\left( ab+c\right) -aa_{8}-a_{9}\right) \right] \partial_z\Xi+u\{  \left[ z\left( -a_{6}+aa_{7}\right) +\left(ba_{7}-a_{8}\right) \right] \partial_z\Xi+\left( a_{6}-aa_{7}\right) \Xi \} =0\,,
	\end{align}
	which decays into two separate equations, since, on the one hand, $\Xi$ is independent of $u$ and $\dot r$ and, on the other hand, we must discard the solution $\partial_z\Xi = 0$ (which would imply $\partial_wL=0$):
	\begin{align}
		 a_{6}b+a_{7}\left( ab+c\right) -aa_{8}-a_{9} &= 0 \,,\label{eq:DwNonTrivConst}\\
		  \left[ z\left( -a_{6}+aa_{7}\right) +\left(ba_{7}-a_{8}\right) \right] \partial_z\Xi+\left( a_{6}-aa_{7}\right) \Xi &= 0\,.\label{eq:XiDwNonTriv}
	\end{align}
	Assuming that $\left( a_{6}-aa_{7}\right)\neq0$, then the second equation has the general solution:
	\begin{align}
		\Xi = \varphi(t,r) (z ( a_{6}-a a_{7}) + \left(ba_{7}-a_{8}\right))\,,
	\end{align}
	where $\phi$ is a free function of $t$ and r.
	But, this implies that $L$ is quadratic in $\dot x$, i.e. pseudo-Riemannian. 
	Thus, nontrivially Finslerian solutions can only exist when \eqref{eq:XiDwNonTriv} is an identity, i.e.\ ,
	\begin{align}\label{eq:curvrel1}
		a_6 = a a_7 = \frac{k_7}{k_{10}}a_7,\quad a_8 = b a_7 = \frac{k_8}{k_{10}}a_7\,.
	\end{align}
	Using this in the constraint \eqref{eq:DwNonTrivConst} gives
	\begin{align}\label{eq:curvrel2}
		a_9 = (ab +c) a_7 = \frac{k_9}{k_{10}} a_7\,.
	\end{align}
	
	The same line of argument can applied to the $rw$-equation \eqref{eq:curvrw} and leads to the constraints
	\begin{align}\label{eq:curvrel3}
		a_{10} = \frac{k_7}{k_{10}}a_{11},\quad a_{12} = \frac{k_{8}}{k_{10}}a_{11},\quad a_{13} = \frac{k_9}{k_{10}}a_{11}\,.
	\end{align}
	
But, the latter relations tell us that:
\begin{align}
[\delta_w,\delta_t]=\frac{a_7}{k_{10}}\delta_w, \quad [\delta_w,\delta_t]=\frac{a_{11}}{k_{10}}\delta_w\,,
\end{align}
that is, the statement of the Lemma holds for $\alpha=\frac{a_7}{k_{10}}, \beta=\frac{a_{11}}{k_{10}}.$
 $[\delta_t,\delta_w]$ and $[\delta_r,\delta_w]$.
\end{proof}

\bigskip

The conditions stated by the above Lemma mean precisely that the Lie brackets involving $\delta_w$ must be proportional to $\delta_w$, hence they must not impose new constraints on $L$.
But, once these conditions are satisfied, it follows that further Lie brackets will automatically also be proportional to $\delta_w$:
\begin{align}
[\delta_r,[\delta_t, \delta_w]] = (\alpha\beta+\delta_r\alpha)\delta_w\,, \quad [\delta_t,[\delta_t, \delta_w]] = (\alpha^2+\delta_t\alpha)\delta_w\, etc.;
\end{align}
therefore, they will not impose any new constraints on $L$.

This Lemma proves the necessity of the first part of condition $I.b)$ in Theorem \ref{thm:main}.\vspace{10pt} To prove its second part, we will have to integrate equations \eqref{eq:curvtr}, \eqref{eq:curvttr}.

\subsubsection{Solving the Lie bracket involving $\delta_t$ and $\delta_r$}
In order to find nontrivial real Finslerian solutions, we assume that $k_{11}=k_{12}=0$ and the necessary conditions given by Lemma \ref{lem:wlie} hold. A further necessary condition for the existence of solutions of our system \eqref{eq:deltt}-\eqref{eq:homL} is $[\delta_t, \delta_r]L=0$, that is \eqref{eq:curvtr}, which we will solve in the following.

\bigskip

Thus, we consider equation~\eqref{eq:curvtr} in the variables $(t,r,\dot r,u, z)$ and use \eqref{eq:newvar} to express it as
\begin{align}\label{eq:trxi}
	A \dot{r}^{2} \partial_z \Xi + \dot{r }u \big( \left( B z + C \right) \partial_z\Xi - B \Xi \big) + u^{2} \big( \left( Dz+ E \right) \partial_z \Xi - F \Xi \big) =0\,,  
\end{align}
where the coefficients $A,B,...,F$ are functions of $t$ and $r$ only. More precisely, 
\begin{align}\label{A-F_def}
\begin{split}
A &:=b\left( aa_{1}+a_{2}\right) +\left( ab+c\right) \left(aa_{3}+a_{4}\right) -a_{5}\left( 2ab+c\right)\,,   \\
B &:=a\left( aa_{3}+a_{4}\right) -\left( aa_{1}+a_{2}\right)\,, \\
C &:=\left( ab+c\right) a_{3}+b\left( aa_{3}+a_{4}\right) +b (a_{1}-2a_{5})\,, \\
D &:= aa_{3}-a_{1}+a_{5}\,, \\
E &:=ba_{3}\,, \\ 
F &:=aa_{3}-a_{1}\,.
\end{split}
\end{align}

A first question to answer is when does equation \eqref{eq:curvtr}, re-expressed as \eqref{eq:trxi}, impose an independent restriction compared to \eqref{eq:deltaw}. The answer is given by the following Lemma.

\begin{lemma} \label{lem:AF_0}
Assume $k_{10}\neq 0$. Then, the following statements are equivalent:
\begin{enumerate}
	\item $A=B=C=D=E=F=0$
	
	\item $\left[ \delta _{t},\delta _{r}\right] = \alpha \delta _{w},$ where either $\alpha = 0$ or $b=c=0$.
\end{enumerate}
\end{lemma}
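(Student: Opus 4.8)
The plan is to establish the equivalence by computing the Lie bracket $[\delta_t,\delta_r]$ explicitly in the variable-$w$ formalism and comparing it term-by-term with $\alpha\,\delta_w$. First I would recall that, by \eqref{eq:curvtr}, the bracket acts on any spherically symmetric $L$ as $[\delta_t,\delta_r]L = (a_1\dot t + a_2\dot r)\dot\partial_t L + (a_3\dot t + a_4\dot r)\dot\partial_r L + a_5 w\partial_w L$, so as a vector field (on the relevant fibre coordinates) $[\delta_t,\delta_r] = (a_1\dot t + a_2\dot r)\dot\partial_t + (a_3\dot t + a_4\dot r)\dot\partial_r + a_5 w\partial_w$. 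On the other hand $\delta_w = w k_7\dot\partial_t + w k_{10}\dot\partial_r + (k_8\dot t + k_9\dot r)\partial_w = w k_{10}\big(a\,\dot\partial_t + \dot\partial_r\big) + k_{10}(b\dot t + c'\dot r)\partial_w$ — but note $k_8\dot t + k_9\dot r = k_{10}(b\dot t + (ab+c)\dot r)$ since $k_9/k_{10} = ab+c$ by the definitions \eqref{eq:abc}. So I would write $\delta_w/k_{10} = w a\,\dot\partial_t + w\,\dot\partial_r + (b\dot t + (ab+c)\dot r)\partial_w$.

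The key step is then the following bookkeeping. The expressions \eqref{eq:newvar} show how $\dot\partial_t L,\dot\partial_r L, w\partial_w L$ re-expand in the basis $\{\dot r\,\partial_z\Xi,\ u\,\partial_z\Xi,\ u\,\Xi\}$ (for the solution form $L = u^2\Xi(t,r,z)$ of the $\delta_w$-equation), and substituting these into $[\delta_t,\delta_r]L=0$ yields precisely \eqref{eq:trxi}, namely $A\dot r^2\partial_z\Xi + \dot r u\big((Bz+C)\partial_z\Xi - B\Xi\big) + u^2\big((Dz+E)\partial_z\Xi - F\Xi\big) = 0$, with $A,\dots,F$ the combinations \eqref{A-F_def} of the $a_i$ and $a,b,c$. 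Now $\delta_w L = 0$ means exactly $u^2 z\,\partial_z\Xi$-type relations hold, so: "$[\delta_t,\delta_r] = \alpha\delta_w$ as operators on the solution space of $\delta_w L = 0$" is equivalent to "$[\delta_t,\delta_r]L$ is, identically in $\Xi$ and in $(\dot r, u)$, a multiple of $\delta_w L$", which after the substitution \eqref{eq:newvar} forces the coefficient of each monomial $\dot r^2\partial_z\Xi$, $\dot r u\,\partial_z\Xi$, $\dot r u\,\Xi$, $u^2\partial_z\Xi$, $u^2 z\partial_z\Xi$, $u^2\Xi$ to match. The $\Xi$-coefficients ($B$ in the $\dot r u$ slot, $F$ in the $u^2$ slot) and the non-$z$ part of the $\partial_z\Xi$-coefficients ($A$, $C$, $E$) must all vanish, while the $z\partial_z\Xi$-coefficient $D$ must also vanish because $\delta_w L$ contains no bare $u^2 z\,\partial_z\Xi$ term once $\delta_w L=0$ is used — more carefully, I would first reduce using $\delta_w L = 0$ (equivalently $(\dot r^2(2ab+c) + 2u\dot r b - u^2 z)\partial_z\Xi = 0$) and then read off that $[\delta_t,\delta_r]=\alpha\delta_w$ is equivalent to $A=B=C=D=E=F=0$ together with the identification of $\alpha$ from the surviving pieces. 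Conversely $A=B=C=D=E=F=0$ makes \eqref{eq:trxi} an identity, i.e. $[\delta_t,\delta_r]L=0$ holds automatically whenever $\delta_w L=0$, which (again by a characteristics/rank argument on the fibre, exactly as in the remarks preceding the Lemma) is possible for a nondegenerate $L$ only if $[\delta_t,\delta_r]$ is pointwise proportional to $\delta_w$.

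The remaining subtlety — and the last clause of the statement, the "either $\alpha=0$ or $b=c=0$" dichotomy — comes from distinguishing \emph{when} $\delta_w$ and $[\delta_t,\delta_r]$ can be proportional with nonzero factor. Here I would argue directly from the coefficient vectors: $[\delta_t,\delta_r]$ has $\dot\partial_t$-coefficient $a_1\dot t + a_2\dot r$ with no explicit $w$, whereas $\delta_w$ has $\dot\partial_t$-coefficient $w k_7\dot t$-free but proportional to $w$; matching forces, when $\alpha\neq 0$, relations that (using $E = b a_3 = 0$, $F = aa_3 - a_1 = 0$, $D = aa_3 - a_1 + a_5 = 0 \Rightarrow a_5=0$, and the analogous ones for $a_1,\dots,a_4$) pin down $a_i$ in terms of $a,b,c,k_{10}$ and ultimately, after feeding back through $E=0$ and the structure of $v = c\dot r^2 - 2b\dot t\dot r - w^2$, require $b=c=0$ (otherwise the $w^2$ and $\dot r^2$ pieces of $\delta_w$ cannot be matched by the $w$-linear and $\dot t,\dot r$-linear pieces of $[\delta_t,\delta_r]$). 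I expect this last dichotomy to be the main obstacle: it is the one place where one cannot argue purely formally from "$A=\dots=F=0$" but must track the actual shape of $v$ and of the two vector fields. The cleanest route is probably to observe that $\delta_w$ is $w$-homogeneous of a definite type (it increases the "$w$-weight" in a graded sense) while $[\delta_t,\delta_r]$ is built from the $w$-weight-zero curvature, so proportionality with a nonzero function is only compatible with the degenerate sub-case $b=c=0$ in which $\delta_w$ collapses to $w k_{10}(a\,\dot\partial_t + \dot\partial_r)$ with no $\partial_w$ component at all.
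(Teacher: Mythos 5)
Your proposal is correct in substance but reaches the equivalence by a genuinely different route than the paper. The paper's proof is purely algebraic and direct: for $2\Rightarrow 1$ it writes the three component equations of $[\delta_t,\delta_r]=\alpha\delta_w$, namely $a_1\dot t+a_2\dot r=\alpha k_7 w$, $a_3\dot t+a_4\dot r=\alpha k_{10}w$, $a_5 w=\alpha(k_8\dot t+k_9\dot r)$, and eliminates $\alpha$ pairwise; for $1\Rightarrow 2$ it solves $F=D=B=E=C=A=0$ by hand ($a_1=aa_3$, $a_5=0$, $a_2=aa_4$, then the case split on $a_3$). You instead characterize $A=\dots=F=0$ as the condition that $[\delta_t,\delta_r]$ annihilate every solution $L=u^2\Xi(t,r,z)$ of $\delta_wL=0$, and then invoke a rank argument on the three fibre directions: taking $\Xi=1$ and $\Xi=z$ shows $[\delta_t,\delta_r]$ kills the two independent invariants $u,v$ of $\delta_w$, so it is tangent to their one-dimensional common level sets and hence pointwise proportional to $\delta_w$ (which is nonzero since $k_{10}\neq0$). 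This buys a conceptual explanation of \emph{why} exactly these six quantities are the obstruction, at the price of being less self-contained (it leans on \eqref{eq:trxi} and \eqref{eq:newvar}) and of requiring you to make the "characteristics/rank" step explicit. The one place you flag as the main obstacle — the dichotomy "$\alpha=0$ or $b=c=0$" — is actually the easiest step in the paper's treatment and your "$w$-weight" heuristic is precisely the right mechanism: eliminating $\alpha$ between the $\dot\partial_r$- and $\partial_w$-components gives $a_5k_{10}w^2=(a_3\dot t+a_4\dot r)(k_8\dot t+k_9\dot r)$, and since the two sides depend on disjoint sets of variables both must vanish, so $a_5=0$ and either $a_3=a_4=0$ (forcing $\alpha=0$) or $k_8=k_9=0$ (i.e., $b=c=0$). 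In particular your suspicion that the dichotomy is a consequence of proportionality alone, rather than an independent hypothesis, is confirmed by this computation.
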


\begin{proof}
	$1\Rightarrow 2:$ From $F=0,D=0$ we find: 
	\begin{equation}
	a_{1}=aa_{3}, ~\ \ a_{5}=0.  \label{eq:a1_a3}
	\end{equation}%
	Further, using (\ref{eq:a1_a3}) into $B=0$ gives:%
	\[
	a_{2}=aa_{4}.
	\]
	Equation $E=0$ then leads to two possibilities:
	\begin{enumerate}
			\item  If $a_{3}\not=0,$ then, necessarily $b=0.$ Then, $C=0$ gives: $c=0$; collecting the results, we have:%
		\[
		\left[ \delta _{t},\delta _{r}\right] =(a_{3}\dot{t}+a_{4}\dot{r})(a\dot{%
			\partial}_{t}+\dot{\partial}_{r}),~\ \ \ \delta _{w}=k_{10}w(a\dot{\partial}%
		_{t}+\dot{\partial}_{r}),
		\]%
		therefore, $\left[ \delta _{t},\delta _{r}\right] $ and $\delta _{w}$ are
		proportional, with proportionality factor: $\alpha =\dfrac{(a_{3}\dot{t}%
			+a_{4}\dot{r})}{k_{10}w}.$
		
		\item  $a_{3}=0,$ which, using the remaining equations, gives $a_{1}=a_{2}=a_{3}=a_{4}=a_{5}=0,$ (that is, $\left[ \delta _{t},\delta _{r}\right] =0\delta _{w}$), or, again $b=c=0$ (which is similar the former case).
		\end{enumerate}

	$2\Rightarrow 1:$ The proportionality hypothesis $\left[ \delta _{t},\delta
	_{r}\right] =\alpha \delta _{w}$ (where $\alpha $ can depend both on $x$ and 
	$\dot{x}$) means:%
	\[
	a_{1}\dot{t}+a_{2}\dot{r}=\alpha k_{7}w;~\ \ a_{3}\dot{t}+a_{4}\dot{r}%
	=\alpha k_{10}w,~\ \ \ a_{5}w=\alpha \left( k_{8}\dot{t}+k_{9}\dot{r}\right)
	.
	\]%
	Eliminating $\alpha $ between the first two relations and taking into
	account that $a_{i},k_{i}$ do not depend on $\dot{t},\dot{r}$ or $w,$ yields:%
	\begin{equation}
	a_{1}=aa_{3},~\ \ a_{2}=aa_{4},  \label{eq:a1a3a2a4}
	\end{equation}%
	(where we recall that $a=\tfrac{k_{7}}{k_{10}}$). Then, elimination of $%
	\alpha $ between the latter two relations leads to the equations:%
	\[
	k_{8}a_{3}=0,~\ \ k_{9}a_{4}=0,~\ \ \ k_{9}a_{3}+k_{8}a_{4}=0,~\ \ a_{5}=0.
	\]%
	We obtain two possibilities:
	\begin{enumerate}
		\item $a_{3}=a_{4}=0,$ which then leads to $a_{1}=...=a_{5}=0$, which then immediately implies statement 1.
		\item $k_{8}=k_{9}=0,$ which means $b=c=0.$ Together with (\ref{eq:a1a3a2a4})
		and $a_{5}=0,$ these give, again, $A=B=C=D=E=F=0$.
	\end{enumerate}
This concludes the proof.
\end{proof}

\bigskip

Now we are ready to integrate equation \eqref{eq:trxi}; its integration leads to the following lemma.
\begin{lemma}\label{lem:trLie}
	Let $k_{10}\neq 0$ and $A,B,C,D,E,F$ be as in \eqref{A-F_def}. Then:
	\begin{enumerate}
		\item A necessary condition for the Berwald conditions \eqref{eq:deltt}-\eqref{eq:homL} to admit a non-Riemannian Finslerian solution is that
		\begin{align}\label{eq:connconst}
		A=B=C=0\,.
		\end{align}
		\item Moreover, if $D\neq0$, then the solution (if any), is of the form $L = u^2 \Xi(t,r,z)$, with
		\begin{align}\label{eq:Xi1}
		\Xi(t,r,z) = \varphi(t,r) (D z + E)^{\frac{F}{D}}\,,
		\end{align}
		whereas if $D=0,E\neq 0$, this can only be of the form
		\begin{align}\label{eq:Xi2}
		\Xi(t,r,z) = \varphi(t,r) e^{z\frac{F}{E}}\,,
		\end{align}
		where $\varphi$ is an arbitrary function of $t$ and $r$.
		\item If $D=E=0$, then, solutions can only exist when $F=0$ (that is, $[\delta_t, \delta_r] \sim \delta_w$) and:
		\begin{itemize}
			\item  $b,c$ are not both zero. In this case, upon a coordinate change $(t,r)\rightarrow (\tilde{t},\tilde{r})$, $L$ must be independent of the new $t$ and $r$-coordinates, $\tilde t$ and $\tilde r$:
			\begin{align}
				L = L(\dot{\tilde t}, \dot{\tilde r}, w) = u^2 \Xi(z)\,;
			\end{align}
			\item $b=c=0$ and $a\neq0$. In this case, $L$ must be of the form 
			\begin{align}\label{eq:Xi3}
				L = u^2 \Xi(t,r,z)\,,
			\end{align} 
			for an arbitrary function $\Xi = \Xi(t,r,z)$ and $z = -w^2/u^2$.
		\end{itemize}
	\end{enumerate} 	
\end{lemma}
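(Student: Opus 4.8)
The starting point is equation \eqref{eq:trxi}, which after substituting \eqref{eq:wsol1} and \eqref{eq:newvar} has the schematic form
\begin{align}\label{eq:planschema}
A\dot r^2 \partial_z\Xi + \dot r u\big((Bz+C)\partial_z\Xi - B\Xi\big) + u^2\big((Dz+E)\partial_z\Xi - F\Xi\big) = 0\,.
\end{align}
Since $\Xi = \Xi(t,r,z)$ is independent of $u$ and $\dot r$ while $z = v/u^2$ depends on both, the three powers $\dot r^2$, $\dot r u$, $u^2$ appearing here are \emph{not} independent monomials in $(\dot t,\dot r,w)$; one must re-expand them in a genuinely independent set. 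The cleanest route is to pass to the independent variables $(u,\dot r,w)$ (equivalently $(u,\dot r, z)$, recalling $v = c\dot r^2 - 2b\dot t\dot r - w^2$ and $\dot t = u + a\dot r$, so that $z$ and $\dot r$ together with $u$ carry the same information as $(\dot t,\dot r,w)$). Then $\dot r$ and $u$ \emph{are} independent, and since $\Xi$ does not depend on $u$ and $z$ does not depend on $u$ directly (only through the combination $v/u^2$, which we have now renamed $z$), collecting powers of $\dot r$ and $u$ at fixed $z$ gives three separate ODEs in $z$:
\begin{align}\label{eq:planthreeeqs}
A\,\partial_z\Xi = 0\,,\qquad (Bz+C)\partial_z\Xi - B\Xi = 0\,,\qquad (Dz+E)\partial_z\Xi - F\Xi = 0\,.
\end{align}
First I would carefully justify this separation — this is really the only subtle point — using that $\Xi_u = 0$ and that the coefficients $A,\dots,F$ depend on $(t,r)$ only.

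\textbf{Part 1 (necessity of $A=B=C=0$).} Since a non-degenerate $L$-metric forces $\partial_z\Xi\neq 0$ (otherwise $\partial_w L = 0$, contradicting non-degeneracy, as already noted after \eqref{eq:wsol1}), the first equation of \eqref{eq:planthreeeqs} immediately gives $A=0$. For the middle equation: if $B\neq 0$ it integrates to $\Xi = \varphi(t,r)(Bz+C)$, which is affine in $z$, hence $L = u^2\Xi$ is quadratic in $\dot x$, i.e.\ pseudo-Riemannian — excluded by hypothesis. Hence $B=0$, and then the middle equation reduces to $C\,\partial_z\Xi = 0$, forcing $C=0$. This establishes point 1. (Equivalently, by Lemma \ref{lem:AF_0}, $A=B=C=D=E=F=0$ would put us in the degenerate/trivial subcase, which is why the interesting cases have $(D,E,F)\neq(0,0,0)$; but the argument above is self-contained.)

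\textbf{Part 2 ($D\neq 0$ and $D=0,E\neq 0$).} With $A=B=C=0$, only the third equation of \eqref{eq:planthreeeqs} survives: $(Dz+E)\partial_z\Xi = F\Xi$. If $D\neq 0$ this is a separable linear ODE with solution $\Xi = \varphi(t,r)(Dz+E)^{F/D}$, giving \eqref{eq:Xi1}; note $\lambda := F/D$ is a function of $(t,r)$ a priori, but one observes that $\Xi$ genuinely non-quadratic requires $F/D\notin\{0,1\}$, and the later $\delta_t,\delta_r$ equations will pin down $\lambda$ to be constant (this matches the claim $\lambda = F/D = const.$ in Theorem \ref{thm:main}, but within this lemma one only records the form \eqref{eq:Xi1}). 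If $D=0$ but $E\neq 0$, the ODE becomes $E\,\partial_z\Xi = F\Xi$, with solution $\Xi = \varphi(t,r)e^{zF/E}$, which is \eqref{eq:Xi2}.

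\textbf{Part 3 ($D=E=0$).} Here the third equation of \eqref{eq:planthreeeqs} reduces to $F\Xi = 0$; since $\Xi\not\equiv 0$, we get $F=0$, so all of $A,\dots,F$ vanish and, by Lemma \ref{lem:AF_0}, $[\delta_t,\delta_r] = \alpha\delta_w$ with either $\alpha=0$ or $b=c=0$. In this regime \eqref{eq:planthreeeqs} imposes \emph{no} constraint on $\Xi$, so $\Xi = \Xi(t,r,z)$ is so far arbitrary (with $z = v/u^2$; when $b=c=0$ one has $v = -w^2$, so $z = -w^2/u^2$, giving \eqref{eq:Xi3}). It remains to extract the $(t,r)$-dependence, which comes only from \eqref{eq:deltt}–\eqref{eq:deltr}, \emph{not} from the already-exhausted $\dot x$-system. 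I would split according to whether $\alpha=0$ (equivalently $a_1=\dots=a_5=0$, so $[\delta_t,\delta_r]=0$) or $b=c=0$: in the first sub-case the commuting pair $\delta_t,\delta_r$ is a $2$-dimensional abelian distribution on the $(t,r)$-space, and a Frobenius/rectification argument produces coordinates $(\tilde t,\tilde r)$ in which $\delta_t = \partial_{\tilde t}$, $\delta_r = \partial_{\tilde r}$ act only on the base, so the $\delta_t L = \delta_r L = 0$ equations force $L$ independent of $\tilde t,\tilde r$, i.e.\ $L = u^2\Xi(z)$ after the coordinate change (this is the statement of Theorem \ref{thm:main} case 2.ii)). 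In the $b=c=0$, $a\neq 0$ sub-case one keeps $L = u^2\Xi(t,r,z)$ with $z = -w^2/u^2$ and general $\Xi$, deferring the determination of $\Xi$ and the admissible $(t,r)$-dependence to the subsequent analysis of \eqref{eq:deltt}–\eqref{eq:deltr} (Conclusion \ref{conc:XI1}).

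\textbf{Main obstacle.} The only real subtlety is Part 3's reduction via a coordinate change: one must check that the hypotheses ($[\delta_t,\delta_r]=0$, together with $[\delta_t,\delta_w]\sim\delta_w$, $[\delta_r,\delta_w]\sim\delta_w$ from Lemma \ref{lem:wlie}) are exactly what is needed for the vector fields $\delta_t,\delta_r$ to be simultaneously straightenable on the $(t,r)$-base while keeping the variable $u$ (and hence the whole ansatz $L = u^2\Xi(z)$) intact — i.e.\ that the coordinate change $(t,r)\mapsto(\tilde t,\tilde r)$ can be chosen to act trivially on the fibre coordinates. Everything else is the routine separation of \eqref{eq:planschema} into the three ODEs \eqref{eq:planthreeeqs} and their elementary integration.
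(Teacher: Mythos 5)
Your proposal is correct and follows essentially the same route as the paper: the same separation of \eqref{eq:trxi} into the three ODEs by collecting powers of $\dot r$ and $u$ in the variables $(u,\dot r,z)$, the same exclusion of the pseudo-Riemannian branch to force $B=C=0$, the same elementary integration for $D\neq 0$ and $D=0,E\neq 0$, and the same appeal to Lemma \ref{lem:AF_0} plus Frobenius rectification in the $D=E=F=0$ case. The only difference is that you make explicit the justification of the power-separation step, which the paper asserts more briefly.
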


\begin{proof}[Proof of Lemma \ref{lem:trLie}]
	\begin{enumerate}
		\item As $\Xi$ does not depend on $\dot{r}$ or $u$, in equation \eqref{eq:trxi}, each term multiplying different powers of $\dot r$ and $u$ must vanish separately; discarding, again the option $\partial_z\Xi=0$, we find:
		\begin{align}
		A &= 0\,,  \label{tr-eq-split-1} \\
		\left( Bz+C\right) \partial_z\Xi-B\Xi  &=0\,,  \label{tr-eq-split-2} \\
		\left( Dz+E\right) \partial_z\Xi-F\Xi  &=0\,.  \label{tr-eq-split-3}
		\end{align}
		Hence, analogously to Equation \eqref{eq:XiDwNonTriv}, if $B\neq0$ the solution for $\xi$ is of the form
		\begin{align}
		\Xi = \varphi(t,r) (z B + C)\,,
		\end{align}
		which implies a quadratic pseudo-Riemannian form of $L$. Thus, for a non-Riemannian solution, we must set $B=0$, which implies immediately also $C=0$. Thus, we have proven the first statement of the lemma.
		\item Assuming that $D\neq 0$, integration of \eqref{tr-eq-split-3} gives \eqref{eq:Xi1}. For the case $D=0$ and $E\neq0$, one immediately obtains \eqref{eq:Xi2}, which proves the second statement of the lemma. 
		\item Assume $D=E=0$. The case $E=D=0$ and $F\neq0$ leads to $\Xi=0$, which is not a valid solution. Therefore, we must necessarily have $F=0$. Thus, we actually have $A=B=C=D=E=F=0$, which, according to Lemma \ref{lem:AF_0} means that $\left[ \delta _{t},\delta _{r}\right] = \alpha \delta _{w}$, for some $\alpha=\alpha(t,r,\dot{t},\dot{r},w)$; more precisely:
		\begin{itemize}
		\item If $b,c$ are not both zero (which is the same as: $k_8,k_9$ are not both zero), then $[\delta_t,\delta_r]=0$. But, by Frobenius' theorem this means that the integral curves of $\delta_t$ and $\delta_r$ can be used to define new local coordinates $\tilde t$ and $\tilde r$, and thus $\delta_t L= 0$ and $\delta_r L = 0$ imply that $L$ is independent of $\tilde t$ and $\tilde r$.
		\item For the case when $b=c=0$, the proportionality $\left[ \delta _{t},\delta _{r}\right] = \alpha \delta _{w}$ tells us that equations \eqref{eq:deltr} and \eqref{eq:deltaw} are equivalent. Since we already ensured that $L = u^2 \Xi(t,r,z)$ solves the equation $\delta_w L = 0$, see \eqref{eq:wsol1}, this is the general solution and $\Xi(z)$ remains free. Moreover, $b=c=0$ implies $k_8=k_9=0$, thus we find the variable $v=-w^2$ and hence $z=-w^2/u^2$. This proves the third statement of the lemma.
		\end{itemize}
	\end{enumerate}
\end{proof}\vspace{10pt}



The above Lemma shows, in the case of a nontrivial $w$-equation, necessary consistency conditions for the Berwald condition equations, given by the Lie bracket $[\delta_t,\delta_r]$. Let us investigate, in the following, if (and when) do further Lie brackets involving $\delta_t$ and $\delta_r$ impose new consistency conditions.

\subsubsection{The iterated Lie brackets involving $\delta_t$ and $\delta_r$}
Assume, in the following that $k_{11}=k_{12}=0$ and the necessary conditions given by Lemmas \ref{lem:wlie} and \ref{lem:trLie} are satisfied. \\

The iterated Lie bracket equations $ [\delta_t, [\delta_t, \delta_r]]L=0$, \eqref{eq:curvttr}, and $ [\delta_t, [\delta_t, \delta_r]]L=0$, \eqref{eq:currtr}, take a similar form as \eqref{eq:trxi} in the previous section, just with coefficients constructed from the functions $A_1$ to $A_5$ or $B_1$ to $B_5$, defined in appendix \ref{app:curv}, respectively. Hence their treatment is analogous to the system \eqref{tr-eq-split-1}- \eqref{tr-eq-split-3} and we find the necessary conditions
\begin{align}
	\mathcal{A} = \mathcal{B} = \mathcal{C} &= 0\,, \\
	(\mathcal{D}z+\mathcal{E})\partial_z\Xi - \mathcal{F}\Xi &= 0 \label{eq:ttr-eq-split-3}\,,
\end{align}
where, e.g., for $ [\delta_t, [\delta_t, \delta_r]]L$, we have
\begin{align}\label{eq:defmA-mF}
\begin{split}
	\mathcal{A} &:= b\left( aA_{1}+A_{2}\right) +\left( ab+c\right) \left(aA_{3}+A_{4}\right) -\left( 2ab+c\right) A_{5}\\
	\mathcal{B} &:= a\left( aA_{3}+A_{4}\right) -\left( aA_{1}+A_{2}\right) \\
	\mathcal{C} &:= \left( ab+c\right) A_{3}+b\left( aA_{3}+A_{4}\right) + bA_{1}-2bA_{5}\\
	\mathcal{D} &:= aA_{3}-A_{1}+A_{5}\\
	\mathcal{E} &:= bA_{3}\\
	\mathcal{F} &:= aA_{3}-A_{1}\,.
\end{split}
\end{align}

Comparing \eqref{tr-eq-split-3} and \eqref{eq:ttr-eq-split-3} we find that they can only be compatible in two cases: either equation \eqref{tr-eq-split-3} is trivial, that is
\begin{align}
	D = E = F = 0
\end{align}
or:
\begin{align} \label{eq:mathcal_DEF}
	\mathcal{D} = \alpha D,\quad \mathcal{E} = \alpha E,\quad \mathcal{F} = \alpha F\,,
\end{align}
for some function $\alpha=\alpha(t,r,\dot t, \dot r, w)$.
\bigskip
Let us clarify the meaning of these two cases.
\begin{itemize}
\item The first case, $D = E = F = 0$, corresponds to statement 3 of Lemma \ref{lem:trLie}, in particular, $[\delta_t,\delta_r]\sim \delta_w$. But, the latter always entails, taking into account  \eqref{eq:twrw=w}, that  $[\delta_t,[\delta_t,\delta_r]]\sim \delta_w$ and $[\delta_t,[\delta_t,\delta_r]]\sim \delta_w$ , hence the double Lie brackets will automatically not impose any new constraints. 
\item The second case (with $D,E,F$ not all zero - that is, $[\delta_t,\delta_r]$ and $\delta_w$ are linearly independent), corresponds to statement 2 of Lemma \ref{lem:trLie}; in this case, the $\dot{x}$-dependence of $L$ is already completely specified. Hence, in order to have a valid solution of the system \eqref{eq:deltt} to \eqref{eq:homL} (which, we recall, requires that none of the $\dot{x}$-derivatives of $L$ can be zero), the double Lie brackets must necessarily be linear combinations of $[\delta_t,\delta_r]$ and $\delta_w$. Actually, we show in the Lemma below that \eqref{eq:mathcal_DEF} gives a much sharper statement.
\end{itemize}

\begin{lemma}\label{lem:ttrLie}
Assume $k_{10}\neq 0$ and $A,B,C,D,E,F$ are such that $A=B=C=0$, but $D,E,F$ are not all zero. Then, the consistency conditions
\begin{align}\label{eq:ABCDEFhyp}
\quad \mathcal{A}=\mathcal{B}=\mathcal{C} = 0,\quad 
\mathcal{D} = \alpha D,\quad \mathcal{E} = \alpha E,\quad \mathcal{F} = \alpha F\,,
\end{align}
for some $\alpha=\alpha(t,r,\dot t, \dot r, w)$, are equivalent to: 
\begin{align}\label{eq:ttr-tr}
A_i = \alpha a_i, \quad i=1,2,3,4,5\,,
\end{align}
which means:
\begin{align}
[\delta_t,[\delta_t,\delta_r]] \sim [\delta_t,\delta_r].
\end{align}
\end{lemma}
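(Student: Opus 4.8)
\textbf{Proof plan for Lemma \ref{lem:ttrLie}.} The goal is to show that, under the standing assumptions, the system $\mathcal{A}=\mathcal{B}=\mathcal{C}=0$ together with the proportionalities $\mathcal{D}=\alpha D$, $\mathcal{E}=\alpha E$, $\mathcal{F}=\alpha F$ is equivalent to $A_i=\alpha a_i$ for $i=1,\dots,5$. The implication $A_i=\alpha a_i\Rightarrow$ \eqref{eq:ABCDEFhyp} is trivial: comparing the definitions \eqref{A-F_def} of $A,\dots,F$ with the definitions \eqref{eq:defmA-mF} of $\mathcal{A},\dots,\mathcal{F}$, we see the second sextuple is built from $A_1,\dots,A_5$ by exactly the same linear (in fact bilinear in $a,b,c$) formulas by which the first sextuple is built from $a_1,\dots,a_5$; hence substituting $A_i=\alpha a_i$ immediately yields $(\mathcal{A},\mathcal{B},\mathcal{C},\mathcal{D},\mathcal{E},\mathcal{F})=\alpha(A,B,C,D,E,F)=(0,0,0,\alpha D,\alpha E,\alpha F)$, and the last assertion $[\delta_t,[\delta_t,\delta_r]]\sim[\delta_t,\delta_r]$ follows because \eqref{eq:curvttr} reads $[\delta_t,[\delta_t,\delta_r]]L = (A_1\dot t+A_2\dot r)\dot\partial_tL+\ldots$ while \eqref{eq:curvtr} reads $[\delta_t,\delta_r]L=(a_1\dot t+a_2\dot r)\dot\partial_tL+\ldots$, so $A_i=\alpha a_i$ says precisely that $[\delta_t,[\delta_t,\delta_r]]=\alpha\,[\delta_t,\delta_r]$ as vertical vector fields.

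The substance is the forward direction. The plan is to treat $(\mathcal{A},\dots,\mathcal{F})$ and $(A,\dots,F)$ as the images of the vectors $(A_1,\dots,A_5)$ and $(a_1,\dots,a_5)$ under one and the same linear map $\Phi$ depending only on $(a,b,c)$ — explicitly, reading off \eqref{A-F_def}, $\Phi$ sends $(x_1,x_2,x_3,x_4,x_5)$ to the sextuple with components $b(ax_1+x_2)+(ab+c)(ax_3+x_4)-(2ab+c)x_5$, $a(ax_3+x_4)-(ax_1+x_2)$, $(ab+c)x_3+b(ax_3+x_4)+b(x_1-2x_5)$, $ax_3-x_1+x_5$, $bx_3$, $ax_3-x_1$. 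Then the hypothesis \eqref{eq:ABCDEFhyp} says $\Phi(A_1,\dots,A_5)=\alpha\,\Phi(a_1,\dots,a_5)$, i.e. $\Phi\big((A_1,\dots,A_5)-\alpha(a_1,\dots,a_5)\big)=0$, so it suffices to prove that $\ker\Phi$ is such that no nonzero element can occur here — or, more carefully, that any vector in $\ker\Phi$ produces a contradiction with nondegeneracy of $g$ once fed back into \eqref{eq:curvttr}. Concretely I would compute $\ker\Phi$: from the $\mathcal{D}=\mathcal{F}=0$ components of $\Phi(y)=0$ one gets $y_5=0$ and $y_1=ay_3$; feeding this into $\mathcal{B}=0$ gives $y_2=ay_4$; then $\mathcal{E}=by_3=0$ and $\mathcal{C}=0$ split exactly as in the proof of Lemma \ref{lem:AF_0}: either $y_3=0$ (hence $y_1=\dots=y_5=0$, i.e. $y=0$), or $b=0$, in which case $\mathcal{C}=0$ forces $c=0$ and then $y=y_3(a,0,1,0,0)+y_4(0,a,0,1,0)$ spans a two-dimensional kernel, but — and this is the crucial point — in that sub-case $b=c=0$ one has by Lemma \ref{lem:AF_0} that $D=E=F=0$, contradicting the standing hypothesis that $D,E,F$ are not all zero. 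Hence under our hypotheses $\Phi$ is injective, and $\Phi\big((A_i)-\alpha(a_i)\big)=0$ forces $A_i=\alpha a_i$.

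The main obstacle I anticipate is bookkeeping rather than conceptual: one must verify carefully that the map $\Phi$ really is the same for both sextuples, paying attention to the single apparent discrepancy between \eqref{A-F_def} and \eqref{eq:defmA-mF} — namely the $C$-formula has $b(a_1-2a_5)$ whereas the $\mathcal{C}$-formula is written $bA_1-2bA_5$; these are the same, so no genuine asymmetry exists, but this should be checked explicitly. A second point requiring care is that $\alpha$ here is allowed to depend on $(\dot t,\dot r,w)$, whereas $A_i,a_i$ depend only on $(t,r)$; the equation $A_i=\alpha a_i$ with $(a_i)\neq 0$ then forces $\alpha$ to be a function of $(t,r)$ alone (on the open set where some $a_i\neq0$), which is exactly what is needed for the final conclusion $[\delta_t,[\delta_t,\delta_r]]\sim[\delta_t,\delta_r]$ to be a genuine proportionality by a base-manifold function. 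Finally, one should note the logical role of the hypothesis ``$D,E,F$ not all zero'': it is used precisely to rule out the $b=c=0$ branch of $\ker\Phi$, so it must be invoked at that step and nowhere earlier.
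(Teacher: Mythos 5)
Your reformulation of the forward implication --- viewing $(\mathcal{A},\dots,\mathcal{F})$ and $(A,\dots,F)$ as images of $(A_1,\dots,A_5)$ and $(a_1,\dots,a_5)$ under one and the same linear map $\Phi$, so that \eqref{eq:ABCDEFhyp} becomes $\Phi\big((A_i)-\alpha(a_i)\big)=0$ and the claim reduces to injectivity of $\Phi$ --- is a clean repackaging of the case analysis the paper actually performs, and your computation of $\ker\Phi$ (trivial unless $b=c=0$, two-dimensional when $b=c=0$) is essentially correct. The gap is in how you dispose of the branch $b=c=0$. You claim Lemma \ref{lem:AF_0} then gives $D=E=F=0$, contradicting the hypothesis; but statement 2 of that lemma is the \emph{conjunction} of $[\delta_t,\delta_r]=\alpha\delta_w$ with ($\alpha=0$ or $b=c=0$), so $b=c=0$ alone does not trigger its implication $2\Rightarrow 1$. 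In fact $b=c=0$ is compatible with the hypotheses of Lemma \ref{lem:ttrLie}: it forces $k_8=k_9=0$, hence $a_5=0$ and $E=0$, and $A=C=0$ hold identically, yet $D=F=aa_3-a_1$ can be nonzero (for instance $a=0$, $a_2=0$, $a_1\neq0$ gives $A=B=C=0$ and $D=F=-a_1\neq 0$). In that situation the conclusions $A_3=\alpha a_3$, $A_4=\alpha a_4$ genuinely do not follow from \eqref{eq:ABCDEFhyp} --- the two-dimensional kernel you found is realized --- so the lemma cannot be closed by linear algebra on the stated hypotheses alone.

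The paper closes this branch with an extra input. Subtracting $\mathcal{F}=\alpha F$ from $\mathcal{D}=\alpha D$ gives $A_5=\alpha a_5$; the case $a_5=0$ (equivalently $D=F$, which is exactly what $b=c=0$ produces) is then discarded because, with $E=ba_3=0$, equation \eqref{tr-eq-split-3} forces $\Xi$ to be linear in $z$, i.e.\ $L$ pseudo-Riemannian --- excluded by the standing non-Riemannian assumption of the classification. Once $a_5\neq0$, one reads off from \eqref{a_i} that $k_8,k_9$ cannot both vanish, hence $b,c$ are not both zero, which is precisely the injectivity your argument needs. So your proof is repairable, but the repair must import the non-Riemannian requirement via \eqref{tr-eq-split-3} rather than derive a contradiction from ``$D,E,F$ not all zero'' by itself.
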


\begin{proof}[Proof of Lemma \ref{lem:ttrLie}]
	
$\rightarrow$: Assume that \eqref{eq:ABCDEFhyp} are satisfied and let us start by two remarks. First, comparing the relations $	\mathcal{D} = \alpha D,\quad \mathcal{F} = \alpha F$, we find:
\begin{align}
A_5=\alpha a_5.
\end{align}
Second, if $a_5=0$, that is, $D=F$, then, taking into account \eqref{tr-eq-split-3}, the solution, if any, is Riemannian - hence we will discard this case and assume that $a_5 \neq 0$. \\

But, imposing that $a_5\neq 0$, it follows from its expression \eqref{a_i} that $k_8$ and $k_9$ cannot vanish simultaneously; therefore, $b$ and $c$ cannot be simultaneously zero.\vspace{10pt}

If $b\neq 0$, we find from $\mathcal{E} = \alpha E$ that $A_3=\alpha a_3$ and then, from $\mathcal{F} = \alpha F$ that also $A_1=\alpha a_1$. The other two relations \eqref{eq:ttr-tr} follow then immediately from the remaining hypotheses \eqref{eq:ABCDEFhyp}. \vspace{10pt}

If $b=0$, then, taking into account that $c\neq0$, equalities $\mathcal{C}=0, C=0$ give respectively $A_3=0, a_3=0.$ Using these equalities and $A_5=\alpha a_5$ in the remaining hypotheses \eqref{eq:ABCDEFhyp} gives, again, \eqref{eq:ttr-tr}.

$\leftarrow$: Assuming $A_i = \alpha a_i$, relations \eqref{eq:ABCDEFhyp} follow immediately.
\end{proof}\vspace{10pt}

Similarly, we get the consistency condition:
\begin{align}
[\delta_r,[\delta_t,\delta_r]]\sim [\delta_t,\delta_r].
\end{align}

\textbf{Remark.} The proportionalities $[\delta_t,[\delta_t,\delta_r]]\sim [\delta_t,\delta_r]$, $[\delta_r,[\delta_t,\delta_r]]\sim [\delta_t,\delta_r]$ do not, in general, hold automatically (they only seem to hold automatically if we are lucky enough as to have $[\delta_t,\delta_r]\sim \delta_w$); hence one must impose them. Yet, once imposed, these two proportionality relations ensure that further iterated Lie brackets do not yield any new constraints.

\bigskip

This way, we have proven the necessity of the latter part of condition I.b) in Theorem \ref{thm:main} . The next step is to take the solutions we found as ansatzes and explicitly find the solutions of the remaining  equations $\delta_t L = 0$, \eqref{eq:deltt}, and $\delta_r L = 0$, \eqref{eq:deltr}. This will provide necessary \textit{and sufficient} conditions. \vspace{10pt}

Before we do so, let us summarize what we have found so far. For a spherically symmetric, non-Riemannian Berwald Finsler manifold, the Finsler function (if any) must be of one of the three kinds:
\begin{enumerate}
	\item $L =  u^2 \Xi(t,r,z),\ z = v/u^2$, $D\neq0$, $[\delta_t,\delta_r]$ not proportional to $\delta_w$
	\begin{align}
			\Xi(t,r,z) = \varphi(t,r) (D z + E)^{\frac{F}{D}}.
	\end{align}
	\item $L =  u^2 \Xi(t,r,z),\ z = v/u^2$, $D=0, E\neq0$, $[\delta_t,\delta_r]$ not proportional to $\delta_w$
	\begin{align}
			\Xi(t,r,z) = \varphi(t,r) e^{z\frac{F}{E}}.
	\end{align} 
	\item $L =  u^2 \Xi(t,r,z),\ z = v/u^2$, $D=E=F=0$, then $[\delta_t, \delta_r] = \alpha\delta_w$, and $\Xi(t,r,z)$ is arbitrary.
\end{enumerate}

\subsubsection{The $t$- and $r$-equations}
In this section, we will solve the remaining equations $\delta_t L= 0 = \delta_r L$, for $L$ of the form $L = u^2 \Xi(t,r,z)$; an important simplification will be achieved by employing the necessary relations \eqref{eq:curvrel1}-\eqref{eq:curvrel3} between different curvature coefficients given by Lemmas \ref{lem:wlie}, \ref{lem:trLie} and \ref{lem:ttrLie}.

To evaluate these equations for $L = u^2 \Xi(z)$ we use \eqref{eq:newvar} and 
\begin{align}
	\partial_t L &= -2\dot{r}u \Xi \partial_t a + u^{2} \partial_{t} \Xi + \partial_z\Xi \left[ \dot{r}^{2} \left( \partial_t c + 2a \partial_t b \right) + 2 \dot{r} u \left( \partial_t b + z \partial_t a \right) \right]\,,  \label{t-eq-z} \\
	\partial_r L &= -2 \dot{r} u \Xi \partial_ra + u^{2} \partial_{r}\Xi + \partial_z \Xi \left[ \dot{r}^{2} \left( \partial_r c + 2a \partial_r b \right) + 2 \dot{r} u \left( \partial_r b + z \partial_r a \right) \right]\,.  \label{r-eq-z}
\end{align}
Combining all the terms, the equations can be grouped as follows
\begin{align}
	\delta_t L= 0 \Leftrightarrow P_1 \dot r^2 + 2 Q_1 \dot r u + R_1 u^2 = 0\\
	\delta_r L= 0 \Leftrightarrow P_2 \dot r^2 + 2 Q_2 \dot r u + R_2 u^2 = 0\,.
\end{align}
where $P_i, Q_i, R_i, i=1,2$ are functions of $t,r$ and $z$ only. Thus, each of these coefficients must vanish separately. 

Reading of these coefficients from the equations yields:
\begin{align}
	P_{1} &= \partial_z \Xi\left[ \partial_t c+2a\partial_t b-2b\left( k_{2}+k_{1}a\right) - 2\left( k_{6}+k_{4}a\right) \left( c+ab\right) +2k_{8}\left( 2ab+c\right) \right]\,, \\
	P_{2} &= \partial_z \Xi\left[ \partial_r c+2a\partial_r b-2b\left( k_{3}+k_{2}a\right) -2\left( k_{5}+k_{6}a\right) \left( c+ab\right) +2k_{9}\left( 2ab+c\right) \right]\,, \\
	Q_{1} &=\partial_z \Xi\left\{ z\left[ \partial_t a-a\left( k_{6}+k_{4}a\right) + \left( k_{2}+k_{1}a\right) \right] +\left[ \partial_t b-k_{1}b-\left( k_{6}+k_{4}a\right) b-k_{4}\left( c+ab\right) +2k_{8}b\right] \right\} \\
	&+\Xi \left[ -\partial_t a+a\left( k_{6}+k_{4}a\right) -\left(k_{2}+k_{1}a\right) \right]\,, \\
	Q_{2} &=\partial_z \Xi\left\{ z\left[ \partial_r a-a\left( k_{5}+k_{6}a\right) + \left( k_{3}+k_{2}a\right) \right] +\left[ \partial_r b-k_{2}b-\left(k_{5}+k_{6}a\right) b-k_{6}\left( c+ab\right) +2k_{9}b\right] \right\} \\
	&+\Xi \left[ -\partial_r a+a\left( k_{5}+k_{6}a\right) -\left(
	k_{3}+k_{2}a\right) \right] , \\
	R_{1} &=\partial _{t}\Xi +\partial_z \Xi\left[ z\left(2k_{1}-2k_{4}a-2k_{8}\right) -2k_{4}b\right] +\Xi \left(2k_{4}a-2k_{1}\right) , \label{R1} \\
	R_{2} &=\partial _{r}\Xi +\partial_z \Xi \left[ z\left(2k_{2}-2k_{6}a-2k_{9}\right) -2k_{6}b \right] +\Xi \left(2k_{6}a-2k_{2}\right)\,.\label{R2}
\end{align}
Employing the definition of $a,b$ and $c$, see \eqref{eq:abc}, $P_1, P_2, Q_1$ and $Q_2$ can be expressed in terms of the curvature coefficients \eqref{a_i} as:
\begin{align}
	k_{10}^{2}P_{1} &= \partial_z\Xi\left[-a_{8}k_{7}+a_{6}k_{8}-a_{9}k_{10}+a_{7}k_{9}\right]\,,\label{P1}\\
	k_{10}^{2}P_{2} &=\partial_z\Xi \left[-a_{12}k_{7}+a_{10}k_{8}-a_{13}k_{10}+a_{11}k_{9}\right]\,,\label{P2}\,, \\
	k_{10}^{2}Q_{1} &=\partial_z\Xi \left[ z\left(a_{7}k_{7}-a_{6}k_{10}\right) +\left( a_{8}k_{10}-a_{7}k_{8}\right) \right] -\Xi \left( a_{7}k_{7}-a_{6}k_{10}\right)\,,  \label{Q1} \\
	k_{10}^{2}Q_{2} &=\partial_z\Xi \left[ z\left(a_{11}k_{7}-a_{10}k_{10}\right) +\left( a_{12}k_{10}-a_{11}k_{8}\right)\right] 
	-\Xi \left( a_{11}k_{7}-a_{10}k_{10}\right)\,.  \label{Q2}
\end{align}

\begin{lemma}\label{lem:teq-req}
	If the consistency conditions \eqref{eq:curvrel1}-\eqref{eq:curvrel3} hold, then the equations $\delta_t L= 0 = \delta_r L$ are equivalent to 
	\begin{align}
		\partial_t (\ln \Xi) &= M - \partial_z (\ln \Xi) \left(\tilde M z - 2 k_4 b\right) \,, \label{eq:t-M}\\
		\partial_r (\ln \Xi) &= N - \partial_z (\ln \Xi) \left(\tilde N z - 2 k_6 b\right)\,, \label{eq:r-N}
	\end{align}
	with 
	\begin{align}
		M &:= 2 (k_1 - k_4 a)\,, \quad \tilde M := M - 2k_8\,,\\
		N &:= 2 (k_2 - k_6 a)\,, \quad \tilde N := N - 2k_9\,. 
	\end{align}
\end{lemma}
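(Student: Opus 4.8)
The plan is to show that, after substituting the explicit expressions for $P_i,Q_i,R_i$ and using the consistency relations \eqref{eq:curvrel1}-\eqref{eq:curvrel3}, the coefficients $P_1,P_2,Q_1,Q_2$ vanish identically, so that the only surviving content of $\delta_t L=0=\delta_r L$ is $R_1=0=R_2$, which is precisely \eqref{eq:t-M}-\eqref{eq:r-N}. First I would observe that the conditions of Lemma \ref{lem:wlie}, written out in \eqref{eq:curvrel1}-\eqref{eq:curvrel3}, say exactly that the bracketed curvature combinations appearing in \eqref{P1}-\eqref{Q2} collapse: substituting $a_6=a a_7$, $a_8=b a_7$, $a_9=(ab+c)a_7$ into the bracket in \eqref{P1} gives $-ba_7 k_7 + a a_7 k_8 - (ab+c)a_7 k_{10} + a_7 k_9$, and using $a=k_7/k_{10}$, $b=k_8/k_{10}$, $c=(k_9 k_{10}-k_7 k_8)/k_{10}^2$ this is $a_7 k_{10}^{-1}(-k_8 k_7 + k_7 k_8 - (k_7 k_8 + k_9 k_{10}-k_7 k_8) + k_9 k_{10}) = 0$. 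Hence $P_1=0$; the identical computation with $a_{10},a_{11},a_{12},a_{13}$ in place of $a_6,a_7,a_8,a_9$ (justified by \eqref{eq:curvrel3}) gives $P_2=0$.

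Next I would treat $Q_1,Q_2$. In \eqref{Q1} the combination $a_7 k_7 - a_6 k_{10}$ equals $a_7 k_7 - a a_7 k_{10} = a_7 k_7 - (k_7/k_{10})a_7 k_{10} = 0$, and likewise $a_8 k_{10} - a_7 k_8 = b a_7 k_{10} - a_7 k_8 = (k_8/k_{10})a_7 k_{10} - a_7 k_8 = 0$; so the entire right-hand side of \eqref{Q1} vanishes, giving $Q_1=0$. The same cancellation with $a_{10},a_{11}$ in place of $a_6,a_7$ (and $a_{12}$ in place of $a_8$) yields $Q_2=0$. At this point the two grouped equations $P_1\dot r^2 + 2Q_1\dot r u + R_1 u^2 = 0$ and $P_2\dot r^2 + 2Q_2\dot r u + R_2 u^2=0$ reduce to $R_1 u^2 = 0 = R_2 u^2$, i.e. $R_1=R_2=0$.

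Finally I would rewrite $R_1=0$ and $R_2=0$ in the claimed form. Dividing \eqref{R1} by $\Xi$ and recalling $M=2(k_1-k_4 a)$, $\tilde M = M-2k_8$, the equation $R_1=0$ reads
\begin{align}
\partial_t(\ln\Xi) + \partial_z(\ln\Xi)\big[(M-2k_8)z - 2k_4 b\big] - M = 0\,,
\end{align}
which is exactly \eqref{eq:t-M}; similarly $R_2=0$ with $N=2(k_2-k_6 a)$, $\tilde N = N-2k_9$ gives \eqref{eq:r-N}. Since all the steps used only the hypothesis \eqref{eq:curvrel1}-\eqref{eq:curvrel3} and the chain-rule expansions \eqref{eq:newvar}, \eqref{t-eq-z}-\eqref{r-eq-z}, each implication is reversible, so the system $\delta_t L=0=\delta_r L$ is indeed \emph{equivalent} to \eqref{eq:t-M}-\eqref{eq:r-N}.

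The main obstacle I anticipate is purely bookkeeping: verifying that the curvature-coefficient brackets in \eqref{P1}-\eqref{Q2} really do collapse to zero requires carefully substituting the definitions of $a,b,c$ in terms of $k_7,k_8,k_9,k_{10}$ and of the $a_i$ in terms of the connection coefficients (Appendix \ref{app:curv}), and checking the algebra term by term; there is no conceptual difficulty, but the expressions are long enough that a sign slip would be easy. A secondary point to be careful about is that one must genuinely use \emph{all} of \eqref{eq:curvrel1}-\eqref{eq:curvrel3} — the relations for $a_6,a_8,a_9$ kill $P_1,Q_1$, and the parallel relations for $a_{10},a_{12},a_{13}$ kill $P_2,Q_2$ — and that $u\neq 0$ generically on the conic domain, so that dividing by $u^2$ is legitimate.
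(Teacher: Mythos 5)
Your proposal is correct and follows exactly the paper's own argument: substitute the consistency relations \eqref{eq:curvrel1}--\eqref{eq:curvrel3} into \eqref{P1}--\eqref{Q2} to see that $P_1=P_2=Q_1=Q_2=0$ identically, so that $\delta_t L=0=\delta_r L$ reduces to $R_1=R_2=0$, which upon dividing by $\Xi$ are \eqref{eq:t-M} and \eqref{eq:r-N}. The explicit cancellations you carry out are the details the paper leaves as ``straightforward,'' and they check out.
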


\begin{proof}[Proof of Lemma \ref{lem:teq-req}]
	The proof is straightforward. Using \eqref{eq:curvrel1}-\eqref{eq:curvrel3} in \eqref{P1}, \eqref{P2}, \eqref{Q1} and \eqref{Q2} one finds that the relations $P_1=P_2=Q_1=Q_2 = 0$ are identically satisfied. The remaining equations then are $R_1=R_2=0$, which are precisely \eqref{eq:t-M} and \eqref{eq:r-N} as can be seen by comparison with \eqref{R1} and \eqref{R2}.
\end{proof}

\bigskip

The abbreviations $M$ and $N$ are convenient since
\begin{align}\label{eq:DEFMN}
	F = \frac{1}{2}(\partial_t N - \partial_r M)\,,\quad D = \frac{1}{2}(\partial_t \tilde N - \partial_r \tilde M)\,,\quad E = b(k_6 \tilde M - k_4 \tilde N) + \partial_r (k_4 b) - \partial_t (k_6 b)\,.
\end{align}

Having done this preparatory work, let us consider the different possible classes of solutions identified by Lemma \ref{lem:trLie}.

\begin{enumerate}
	\item \textbf{Power law solutions}: $L =  u^2 \Xi(z),\ z = v/u^2$, with
	\begin{align}\label{eq:pl}
		\Xi(t,r,z) = \varphi(t,r) (E + D z)^{\frac{F}{D}} =: \vartheta(t,r)(z + \rho)^\lambda\,.
	\end{align}
	These were obtained as solutions of the curvature constraints, in the case 
	$D\neq0$, $[\delta_t,\delta_r]$ not proportional to $\delta_w$. Also, we have
	set $\lambda = \frac{F}{D}$, factored $D^\lambda$ and absorbed the appearing powers into $\vartheta = \varphi D^\lambda$ and $\rho = \frac{E}{D}$. 
	
	\noindent In the following, we will use the above as an ansatz for the remaining Berwald conditions, i.e., for \eqref{eq:t-M} and \eqref{eq:r-N}. 
	Employing \eqref{eq:pl} in \eqref{eq:t-M} and \eqref{eq:r-N}, multiplying by a factor $(z+\rho)$ and applying two derivatives w.r.t. $z$ on the equations implies that
	\begin{align}
	\partial_t \lambda =\partial_r \lambda  = 0\,.
\end{align}
Thus, the fraction $\lambda  = \frac{F}{D}$ must be a constant, independent of $t$ and $r$. 
But, these conditions are automatically satisfied. To see this, we first note that equation $\partial_t \lambda = 0$ is equivalent to:
\begin{equation}
k_{7}^{2}\left( A_{1}a_{5}-A_{5}a_{1}+\frac{1}{k_{10}}a_{3}a_{5}\left(
a_{6}-aa_{7}\right) -Ba_{5}k_{4}+a(a_{3}A_{5}-A_{3}a_{5})\right) =0\,;
\end{equation}
then, Lemma \ref{lem:wlie} ensures that $a_{6}-aa_{7}=0$, Lemma \ref{lem:trLie} tells us that $B=0$, whereas, by Lemma \ref{lem:ttrLie}, we have $ A_{1}a_{5}-A_{5}a_{1}=0, a_{3}A_{5}-A_{3}a_{5}=0$.
The identity $\partial_r \lambda=0$ follows in a completely similar manner.

\bigskip

Using $\lambda = const.$, and observing that all the involved functions are independent of $z$, it follows that the coefficients of the different powers of $z$ in \eqref{eq:t-M} and \eqref{eq:r-N} must vanish separately. This yields:
\begin{align}\label{eq:vartheta}
	\partial _{t}(\ln \vartheta ) = M-\lambda \tilde{M}\,,\quad
	\partial _{r}(\ln \vartheta ) = N-\lambda \tilde{N}\,,
\end{align}
and: 	
 \begin{align}
\rho \partial _{t}(\ln \vartheta )+\lambda \rho _{,t}  = M\rho +2\lambda k_{4}b\,,\quad
\rho \partial _{r}(\ln \vartheta )+\lambda \rho _{,r}  = N\rho +2\lambda k_{6}b\,,
 \end{align}	
The fact that $\lambda = F/D$, together with \eqref{eq:DEFMN}, ensure that $\partial_r(M-\lambda\tilde{M})=\partial_t(N-\lambda\tilde{N})$, that is, the consistency of equations \eqref{eq:vartheta} which thus determine $\vartheta$ as:
\begin{align}\label{vartheta_power}
		\vartheta(t,r) = \exp\left(\int (M - \lambda \tilde M)dt\right) =  \exp\left(\int (N - \lambda \tilde N)dr\right)\,.
\end{align}
Substituting the found solution into the second one, we obtain for $\rho$ the equations
\begin{align}\label{rho_power}
		\partial_t \rho = 2 k_4 b + \tilde M \rho\,,\quad \partial_r \rho = 2 k_6 b  + \tilde N \rho\,.
\end{align}
But, $\rho$ is already known, as $\rho = E/D$. Substituting this value into the first equation above, using the definitions of $B,C,D,E,F,\tilde{M}$ and the expressions of the derivatives $a_{i,t}$ in terms of the double Lie bracket coefficients $A_i$, we find that this is actually, equivalent to:
\begin{align}
	\left( a_{8}k_{10}-a_{7}k_{8}\right)a_{3}(a_{1}+a_{5})
	+a_{3}^{2}\left( a_{6}k_{8}-a_{8}k_{7}\right) \\
	+ k_{4}k_{10}^{2}(CD-BE)+
	bk_{10}\left( A_{1}a_{3}-A_{3}a_{1}\right) +bk_{10}\left(
	A_{3}a_{5}-A_{5}a_{3}\right) =0\,;
\end{align}
the first two terms vanish by Lemma \ref{lem:wlie}, the third one, using $B=C=0$ (by Lemma \ref{lem:trLie}) and the last two ones, by Lemma \ref{lem:ttrLie}.

\begin{conclusion}[Power law]\label{conc:pl}
	Assuming all the conditions in Lemmas \ref{lem:wlie},  \ref{lem:trLie} and \ref{lem:ttrLie} are satisfied, then $\lambda:= F/D$ is a constant and the pseudo-Finsler function $L = u^2 \Xi$ defined by \eqref{eq:pl} (with  $\vartheta$ defined by the integral \eqref{vartheta_power} and $\rho=E/D$) is of Berwald type, with canonical connection $\Gamma$. Thus, we proved statement II.1.i.) of Theorem~\ref{thm:main}.
\end{conclusion} 

\bigskip

	\item \textbf{Exponential solutions}: $L =  u^2 \Xi(z),\ z = v/u^2$, where:	 
	\begin{align}\label{eq:exp}
		\Xi(t,r,z) = \varphi(t,r) e^{z\frac{F}{E}}\,.
	\end{align} 
	These were obtained for: $D=0, E\neq0$ (thus $b\neq0$ and $a_3\neq0$), $[\delta_t,\delta_r]$ not proportional to $\delta_w$.
	
	 Evaluating the system $A=B=C=D=0$ from \eqref{A-F_def} and solving it for $a_1,a_2,a_4$, one immediately finds:
	 \begin{align}
	 	2 ab +c =0\,,\quad a_1 = a a_3 + a_5\,,\quad a_2 = -a^2 a_3\,,\quad a_4 = -a a_3 + a_5\,. 
	 \end{align}
 	
	We are now ready to plug the ansatz \eqref{eq:exp} into the remaining Berwald conditions \eqref{eq:t-M} and \eqref{eq:r-N}.
	Doing this and realizing that the coefficients in front of the different powers of $z$ need to vanish individually, the resulting equations are (setting $F/E=:\mu$)
 	\begin{align}
 		\partial_t \ln \mu &= -\tilde M,& \partial_r \ln \mu &= -\tilde N\,, \label{eq:mu}\\
 		\partial_t \ln \varphi &= M + 2 k_4 b \mu,& \partial_r \ln \varphi  &= N + 2 k_6 b \mu\,.\label{eq:varphi} 
 	\end{align}
 	This system turns out to be always consistent due to $\mu = F/E$, $D=0$ and the relations \eqref{eq:DEFMN}, as follows. First, we note that $\mu = F/E$ satisfies the two relations \eqref{eq:mu} identically; then, using \eqref{eq:mu} and the last equality \eqref{eq:DEFMN}, we find that \eqref{eq:varphi} consistently determines $\varphi$ as a function of $t$ and $r$ from the connection coefficients, more precisely:
 	\begin{align}\label{eq:phi}
 		\varphi = e^{\int (M + 2 k_4 b \mu)dt} = e^{\int (N + 2 k_6 b \mu) dr}, \quad \mu:=F/E \,.
 	\end{align}
 	
  	\bigskip
  	
	\begin{conclusion}[Exponential solutions]\label{conc:exp}
	Assuming all the conditions in Lemmas \ref{lem:wlie}, \ref{lem:trLie} and \ref{lem:ttrLie} are satisfied, then, pseudo-Finsler functions $L = u^2 \Xi$ given by \eqref{eq:exp}, with $\mu=F/E$ and $\varphi$ defined by \eqref{eq:varphi}, are of Berwald type, with canonical connection $\Gamma$. Thus, we proved statement II.1.ii.) of Theorem \ref{thm:main}.
 	\end{conclusion} 
 
   	\bigskip
   	
 	\item[3a.] \textbf{Arbitrary $\Xi$  ($\boldsymbol{[\delta_t,\delta_r] = \alpha \delta_w$}):}
   	 $L =  u^2 \Xi(t,r,z),\ z = v/u^2$, $D=E=F=0$, $b=0$ and $c=0$. 
 	 Thus: $[\delta_t, \delta_r] = \alpha(t,r,\dot t, \dot r, w)\delta_w$, $k_8=k_9=0$ and $z = -w^2/u^2$ and $\Xi(t,r,z)$ is arbitrary so far.
 	 
 	 Since $k_8=k_9=0$ we have that $M=\tilde M$ and $N=\tilde N$ and the Ansatz $\Xi = z \xi$ transforms \eqref{eq:t-M} and \eqref{eq:r-N} into: 
 	 \begin{align}
 	 	\partial_t \xi + M z \partial_z \xi = 0\,,\quad \partial_r \xi + N z \partial_z \xi = 0\,,
 	 \end{align}
 	with integrability condition $\partial_r (M z \partial_z \xi) = \partial_t(N z \partial_z \xi)$. But, a quick calculation shows that this is equivalent to either $\partial_z\xi=0$ (which leads to a pseudo-Riemannian $L$ and is thus discarded), or
 	\begin{align}
 		\partial_r M - \partial_t N  = -2F = 0\,.
 	\end{align}
 	This equation is identically satisfied. On the given domain, therefore there exist a function $f(t,r)$ such that $M = \partial_t f$ and $N = \partial_r f$. Thus, the general solution of the remaining $\delta_t$ and $\delta_r$ equations is, in this case:
 	\begin{align}
 		\Xi(t,r,z) = z \xi(z e^{-f(t,r)})\,.
 	\end{align}
 	
 	\begin{conclusion}[Arbitrary $\Xi$ ($\boldsymbol{[\delta_t,\delta_r] = \alpha \delta_w $}):]\label{conc:XI1}
 		Given $A=B=C=D=E=F=0$ and $b=c=0$, then 
 		\begin{align}
 			L = u^2 \Xi = -w^2 \xi(q)\,,\quad q= - \frac{w^2}{u^2}e^{-f(t,r)}\,,
 		\end{align}
 		is of Berwald type, where $f = \int_C (M dt+Ndr)$ and $C$ is an arbitrary path from any point $t_0, r_0$ to $(t,r)$. Thus, we proved statement II.2.i.) of Theorem \ref{thm:main}. 
 	\end{conclusion}
 
 	\bigskip
 	
 	\item[3b.] 	We directly jump to the conclusion here, since for this case all derivations have been done already.
 	\begin{conclusion}[Arbitrary $\Xi$ ($\boldsymbol{[\delta_t,\delta_r] = 0$}):]\label{conc:XI2}
 	$L =  u^2 \Xi(t,r,z),\ z = v/u^2$, $D=E=F=0$, $b$ and $c$ are not both vanishing, then $[\delta_t,\delta_r] = 0$ identically. As already discussed in point $3$ of Lemma \ref{lem:trLie}, in this case there exist coordinates $\tilde t$ and $\tilde r$ such that $\delta_t \rightarrow \partial_{\tilde t}$ and $\delta_r \rightarrow \partial_{\tilde r}$. Thus, in these new coordinates, $L$ will be of the form
 	\begin{align}
 		\tilde L(\dot {\tilde  t}, \dot {\tilde r}, w) = u(\dot {\tilde  t}, \dot {\tilde r}, w)\Xi(z(\dot {\tilde  t}, \dot {\tilde r}, w))\,,
 	\end{align}
 	independent of $\tilde t$ and $\tilde r$. Thus, we proved statement II.2.ii.) of Theorem \ref{thm:main}.
 	\end{conclusion} 
\end{enumerate}

The identification of these four cases concludes our discussion of spherically symmetric the existence of Finsler manifolds of Berwald type, with connections for which $\delta_w$ is a nontrivial differential operator. We saw that for this case, non-Riemannian pseudo-Finsler structures of Berwald type exist and that they can be nicely classified into the four cases we listed.

\subsection{Case 2: Trivial $w$-equation}\label{sec:mainCase2}
The second major case we still need to discuss is the case when $\delta_w = 0$, i.e.\ if in addition to $k_{11}$ and $k_{12}$, we also have:
\begin{align}\label{eq:kw0}
	k_7=k_8=k_9=k_{10} = 0\,.
\end{align}
This immediately implies for the curvature components, see \eqref{a_i},
\begin{align}
	a_5 = a_6 = ... = a_{13} = 0,\quad a_{14}=1\,,
\end{align}
which means that \eqref{eq:curvtw} and \eqref{eq:curvrw} are also trivially satisfied. The constraints which can still be nontrivial are \eqref{eq:curvtr} and the iterated Lie bracket ones \eqref{eq:curvttr} and \eqref{eq:currtr}.

To find Berwald Finsler functions in this case, we again distinguish two subcases, namely if $[\delta_t, \delta_r]$ vanishes or not.

\bigskip
\subsubsection{$[\delta_t, \delta_r]=0$ }
This case is fairly simple. In addition to \eqref{eq:kw0}, as discussed previously in case 3. of Lemma \ref{lem:trLie} and Conclusion \ref{conc:XI2}, there exists a coordinate change $(t,r) \rightarrow (\tilde t,\tilde r)$ in which the $t$ and $r$ equations \eqref{eq:deltt} and \eqref{eq:deltr} reduce to $\partial_{\tilde t}L = \partial_{\tilde r} L =0$, meaning that any function $L$ with no dependence of $\tilde t$ or $\tilde r$ solves the Berwald conditions.

Note that, in these new coordinates, all coefficients $k_i$ need to be zero, since $\delta_w=0$, $\delta_{\tilde t} = \partial_{\tilde t}$  and $\delta_{\tilde r} = \partial_{\tilde r}$, and so the only nonvanishing connection coefficients are
\begin{align}
	N^\phi{}_\theta = \dot \phi \cot \theta\,,
	\quad N^\theta{}_\phi = \dot \phi \cos\theta \sin\theta\,,
	\quad N^\phi{}_\phi = \dot \theta \cot\theta\,.
\end{align}
These are precisely the Levi-Civita connection coefficients of the metric $w$ on the $2$-sphere. 

\begin{conclusion}[No $t$ and $r$ dependence:]\label{conc:notr}
	Any Finsler Lagrangian
	\begin{align}
	L = \dot t^2 \Xi(p,s)\,,\quad p = \frac{\dot r}{\dot t},\quad s = \frac{w}{\dot t}\,,
	\end{align}
	defines a Finsler manifold of Berwald type, for $\Xi$ being an arbitrary function of $p$ and $s$. Thus, we proved statement 1. of Theorem \ref{thm:main2}.
\end{conclusion}

\bigskip

\subsubsection{$[\delta_t, \delta_r]\neq0$ }
If $[\delta_t, \delta_r]\neq0$, then we still have to solve the curvature constraints \eqref{eq:curvtr}, \eqref{eq:curvttr} and \eqref{eq:currtr}. Since $a_5=0$ (and using the expressions of $A_5$ and $B_5$ in Appendix \ref{app:curv}), they are all of the form
\begin{align}\label{eq:w0tr}
	(X_1 \dot t + X_2 \dot r)\dot\partial_t L + (X_3 \dot t + X_4 \dot r)\dot \partial_r L = 0, \quad  X \in \{a,A,B\}\,.
\end{align}
Interpreting this system of three PDE's as algebraic system of equations in $\dot \partial_tL$ and  $\dot \partial_rL$, we can only find a nontrivial solution if all of these equations are proportional, that is:
\begin{align}
[\delta_t,[\delta_t, \delta_r]] \sim [\delta_t, \delta_r]\,,  [\delta_r,[\delta_t, \delta_r]] \sim [\delta_t, \delta_r].
\end{align}
Sorting this requirement according to the powers of  $\dot t$ and $\dot r$, we find the constraints:
\begin{align}\label{eq:aAB}
	A_3 a_1 &= A_1 a_3 & A_4 a_2 &= A_2 a_4 & A_3 a_2 - A_2 a_3 - A_1 a_4 + A_4 a_1 &= 0\nonumber\\
	B_3 a_1 &= B_1 a_3 & B_4 a_2 &= B_2 a_4 & B_3 a_2 - B_2 a_3 - B_1 a_4 + B_4 a_1 &= 0\,.
\end{align}

\bigskip
Assuming these conditions are satisfied, all the equations \eqref{eq:w0tr} are, actually, equivalent. Thus, it is sufficient to solve:
\begin{align}\label{eq:a0tr}
(a_1 \dot t + a_2 \dot r)\dot\partial_t L + (a_3 \dot t + a_4 \dot r)\dot \partial_r L = 0\,,
\end{align}
explicitly.

\bigskip

Introducing the new variables
\begin{align}
	p=\frac{\dot r}{\dot t}\,, \quad u = \dot t e^{I(t,r,p)}
\end{align}
with
\begin{align}\label{eq:defIp}
I(t,r,p) = \int \frac{(a_1+a_2 p)}{( a_2 p^2 - (a_4-a_1)p -a_3 )} dp\,
\end{align}
and using $(t,r,u, p, w)$ as independent variables in \eqref{eq:a0tr} we find that for being Berwald, $L$ must necessarily be of the form
\begin{align}
	L(t,r,\dot t, \dot r, w) = \Xi(t,r,u,w)\,.
\end{align} 

\bigskip
Furher, we insert the solution of the constraints equation into equations \eqref{eq:deltt} and \eqref{eq:deltr} to obtain
\begin{align}
	\delta_t L &=0 \quad \Leftrightarrow \quad \partial_t \Xi + K u \partial_u \Xi = 0\,,\label{eq:txi}\\
	\delta_r L &= 0 \quad \Leftrightarrow \quad \partial_r \Xi + T u \partial_u \Xi = 0\,,\label{eq:rxi}
\end{align}
with
\begin{align}
	K &= \partial_t I - (k_1 + k_2 p) + (k_1 p + k_2 p^2 - k_4 - k_6 p)\partial_p I\,,\\
	T &= \partial_r I - (k_2 + k_3 p) + (k_2 p + k_3 p^2 - k_6 - k_4 p)\partial_p I\,.
\end{align}
Since $\Xi$ and $\partial_u \Xi$ are independent of $p$, the equations \eqref{eq:txi} and \eqref{eq:rxi} imply $\partial_p K = 0 = \partial_p T$. Using \eqref{eq:defIp} one finds, by direct inspection, that these conditions are the same as \eqref{eq:aAB} and thus satisfied. Hence, we can conclude that 
\begin{align}
	K = K(t,r)\,,\quad T = T(t,r)\,.
\end{align}
Also, the integrability condition $\partial_r (K \partial_u \Xi) = \partial_t (T \partial_u \Xi)$ of \eqref{eq:txi} and \eqref{eq:rxi}, (which reduces to $\partial_r K  = \partial_tT$ by using  the $u$-derivative of \eqref{eq:txi} and \eqref{eq:rxi}), is identically satisfied by virtue of \eqref{eq:aAB}. The proof is straightforward using the definition of $K$ and $T$ and the definition of the coefficients $a_i$, $A_i$ and $B_i$ in terms of the connection coefficients as displayed in Appendix \ref{app:curv}.

The general solution of \eqref{eq:txi} and \eqref{eq:rxi} is now easily obtained as

\begin{align}
	\Xi(t,r,u,w) = \xi(u e^{-\varphi},w)\,,
\end{align}
where $\varphi$ is a solution of the system
\begin{align}
	\partial_t \varphi = K,\quad \partial_r \varphi = T\,.
\end{align}
Finally, using the 2-homogeneity of $L$, we can deduce the following.
\begin{conclusion}[Arbitrary $\Xi$: Case 3]\label{conc:xi3}
	Given $\delta_w$ is trivial and $[\delta_t,\delta_r]\neq 0$, such that $[\delta_t,[\delta_t, \delta_r]] \sim [\delta_t, \delta_r]\,,  [\delta_r,[\delta_t, \delta_r]] \sim [\delta_t, \delta_r]$, then 
	\begin{align}
		L =  w^2 \xi(q)\,,\quad q = \frac{\dot t e^{I-\varphi}}{w},\quad I =\int \frac{(a_1+a_2 p)}{( a_2 p^2 - (a_4-a_1)p -a_3 )} dp\,.\,,
	\end{align}
where $\xi$ is an arbitrary function of $q$, $\varphi = \int_C (K dt + T dr)$ and $C$ is an arbitrary path from any point $(t_0, r_0)$ to $(t,r)$, defines a Finsler manifold of Berwald type. Thus, we proved statement 2. of Theorem \ref{thm:main2}.
\end{conclusion}

\section{Conclusion}\label{sec:conc}
We classified all $4$-dimensional $SO(3)$-spherically symmetric, non-Riemannian pseudo-Finsler functions of Berwald type in the Theorems \ref{thm:main} and \ref{thm:main2}: there exist 6 such classes. Moreover we presented a further simplified version of the necessary and sufficient condition for a pseudo-Finsler space to be of Berwald type in Theorem \ref{thm:berw}, which we used to prove the classification. Our findings extend the classification of pseudo-Finsler spaces of Berwald type from homogeneous and isotropic symmetry \cite{Hohmann:2020mgs}.

We did not discuss the signature properties of the Finsler metric in all of our findings, i.e. they hold for positive definite Finsler spaces, as well as Lorentzian Finsler spacetimes. For future applications we mainly have the latter case in mind.

The next step in this research program is to use the forms of the Finsler Lagrangians \eqref{eq:1-L2}, \eqref{eq:1-L3}, \eqref{eq:1-L4},\eqref{eq:1-L5}, \eqref{eq:2-L1} and \eqref{eq:2-L2} as ansatz to:
\begin{itemize}
	\item construct spherically symmetric unicorn Finsler spacetimes;
	\item solve Finsler gravity equations to derive the gravitational field of Black Holes, and astrophysical compact objects such as ordinary and neutron stars described as kinetic gases.
\end{itemize}

A further future research direction is to consider the affine connection and the Finsler Lagrangian as independent variables and find solutions to the Palatini type Finsler affine gravity field equations suggested in \cite{Javaloyes:2021jqw}, in spatial spherical symmetry.

\appendix

\section{The curvature components and their derivatives}\label{app:curv}
Important necessary conditions for a Finsler manifold to be of Berwald type can be deduced from its curvature and derivatives thereof, see \eqref{eq:curvtr}-\eqref{eq:currtr}. Here, we display the curvature components in terms of the connection coefficients $k_{a}(t,r)$ and their derivatives.

The coefficients of the Lie brackets $\left[ \delta _{a},\delta _{b} \right]$, where $a\in \{t,r,\theta,\phi\}$ are defined by the curvature coefficients of the connection 
\begin{equation*}
	R^a{}_{bc}=\delta _{c}N^a{}_{b}-\delta_{b}N^a{}_{c} = \partial_c N^a{}_b - N^d{}_c \dot\partial_d N^a{}_b -  \partial_b N^a{}_c + N^d{}_b \dot\partial_d N^a{}_c\,.
\end{equation*}%
A direct computation gives
\begin{equation}
	\begin{array}{llll}
		R_{~tr}^{t}=a_{1}\dot{t}+a_{2}\dot{r} & R_{~tr}^{r}=a_{3}\dot{t}+a_{4}\dot{r} & R_{~tr}^{\theta }=a_{5}\dot{\theta} & R_{~tr}^{\varphi }=a_{5}\dot{\varphi} \\ 
		R_{~t\theta }^{t}=a_{6}\dot{\theta} & R_{~t\theta }^{r}=a_{7}\dot{\theta} & 
		R_{t\theta }^{\theta }=a_{8}\dot{t}+a_{9}\dot{r} & R_{t\theta }^{\varphi }=0
		\\ 
		R_{~t\varphi }^{t}=a_{6}\dot{\varphi}\sin ^{2}\theta  & R_{~t\varphi
		}^{r}=a_{7}\dot{\varphi}\sin ^{2}\theta  & R_{~t\varphi }^{\theta }=0 & 
		R_{t\varphi }^{\varphi }=a_{8}\dot{t}+a_{9}\dot{r} \\ 
		R_{~r\theta }^{t}=a_{10}\dot{\theta} & R_{~r\theta }^{r}=a_{11}\dot{\theta}
		& R_{~r\theta }^{\theta }=a_{12}\dot{t}+a_{13}\dot{r} & R_{~r\theta
		}^{\varphi }=0 \\ 
		R_{~r\varphi }^{t}=a_{10}\dot{\varphi}\sin ^{2}\theta  & R_{~r\varphi
		}^{r}=a_{11}\dot{\varphi}\sin ^{2}\theta  & R_{~r\varphi }^{\theta }=0 & 
		R_{~r\varphi }^{\varphi }=a_{12}\dot{t}+a_{13}\dot{r} \\ 
		R_{~\theta \varphi }^{t}=0 & R_{~\theta \varphi }^{r}=0 & R_{~\theta \varphi
		}^{\theta }=-a_{14}\dot{\varphi}\sin ^{2}\theta  & R_{~\theta \varphi
		}^{\varphi }=a_{14}\dot{\theta}\,,
	\end{array}
\end{equation}
where the coefficients $a_i$ are functions of $t$ and $r$ given by
\begin{align}\label{a_i}
	\begin{split}
	a_{1}  &= k_{1,r}-k_{2,t}+k_{3}k_{4}-k_{2}k_{6}\,,\\
	a_{2}  &= k_{2,r}-k_{3,t}+k_{2}^{2}+k_{3}k_{6}-k_{1}k_{3}-k_{2}k_{5}\,,  \\
	a_{3}  &= k_{4,r}-k_{6,t}+k_{1}k_{6}+k_{4}k_{5}-k_{2}k_{4}-k_{6}^{2}\,, \\
	a_{4}  &= k_{6,r}-k_{5,t}+k_{2}k_{6}-k_{3}k_{4}\,, \\
	a_{5}  &= k_{8,r}-k_{9,t}\,, \\
	a_{6}  &= -k_{7,t}+k_{7}k_{8}-k_{1}k_{7}-k_{2}k_{10}\,,\\
	a_{7}  &= -k_{10,t}+k_{8}k_{10}-k_{4}k_{7}-k_{6}k_{10}\,, \\
	a_{8}  &= -k_{8,t}+k_{1}k_{8}+k_{4}k_{9}-k_{8}^{2}\,,\\
	a_{9}  &= -k_{9,t}+k_{2}k_{8}+k_{6}k_{9}-k_{8}k_{9}  \,, \\
	a_{10} &= -k_{7,r}+k_{7}k_{9}-k_{2}k_{7}-k_{3}k_{10} \,,  \\
	a_{11} &= -k_{10,r}+k_{9}k_{10}-k_{6}k_{7}-k_{5}k_{10} \,,  \\
	a_{12} &= -k_{8,r}+k_{2}k_{8}+k_{6}k_{9}-k_{8}k_{9}\,,\\
	a_{13} &=-k_{9,r}+k_{3}k_{8}+k_{5}k_{9}-k_{9}^{2} \,, \\
	a_{14} &= 1+k_{7}k_{8}+k_{9}k_{10}  \,.
	\end{split}
\end{align}
Observe that in general $a_{5} = a_9 - a_{12}$ holds, as can be seen from the expressions above. 

Moreover, the Lie brackets between $\delta_w$ and $\delta_t$ or $\delta_r$ can be expressed in terms of these coefficients, as a direct calculation shows
\begin{align}
	[\delta_w, \delta_t] &= a_6 w \dot\partial_t  + a_7 w \dot\partial_r  + (a_8 \dot t + a_9 \dot r) \partial_w \,,\\
	[\delta_w, \delta_r] &= a_{10} w \dot\partial_t  + a_{11} w \dot\partial_r  + (a_{12} \dot t + a_{13} \dot r) \partial_w \,.
\end{align}

The double Lie brackets $\left[ \delta _{t},\left[ \delta _{t},\delta _{r}\right] \right]$ and $\left[ \delta _{r},\left[ \delta _{t},\delta _{r}\right] \right] $ are obtained by direct computation as:
\begin{eqnarray*}
	\left[ \delta _{t},\left[ \delta _{t},\delta _{r}\right] \right] &=&\left(
	A_{1}\dot{t}+A_{2}\dot{r}\right) \dot{\partial}_{t}+\left( A_{3}\dot{t}+A_{4}%
	\dot{r}\right) \dot{\partial}_{r}+A_{5}w{\partial}_{w}, \\
	\left[ \delta _{r},\left[ \delta _{t},\delta _{r}\right] \right] &=&\left(
	B_{1}\dot{t}+B_{2}\dot{r}\right) \dot{\partial}_{t}+\left( B_{3}\dot{t}+B_{4}%
	\dot{r}\right) \dot{\partial}_{r}+B_{5}w{\partial}_{w},
\end{eqnarray*}%
where:
\begin{align}\label{def_A_i_B_i}
	\begin{split}
	A_{1} &= \left( a_{1,t}+a_{3}k_{2}-a_{2}k_{4}\right) ,~\ \\
	A_{2} &=\left(a_{2,t}+a_{2}k_{1}-a_{1}k_{2}+a_{4}k_{2}-a_{2}k_{6}\right)\,,\\
	A_{3} &= \left( a_{3,t}-a_{3}k_{1}+a_{1}k_{4}-a_{4}k_{4}+a_{3}k_{6}\right)\,,\\
	A_{4} &= \left( a_{4,t}+a_{2}k_{4}-a_{3}k_{2}\right)\,,\\
	A_{5} &= a_{5,t}\,, \\
	B_{1} &= \left( a_{1,r}+a_{3}k_{3}-a_{2}k_{6}\right)\,, \\
	B_{2} &= \left(a_{2,r}+a_{2}k_{2}-a_{1}k_{3}+a_{4}k_{3}-a_{2}k_{5}\right)\,, \\
	B_{3} &= \left( a_{3,r}-a_{3}k_{2}+a_{1}k_{6}-a_{4}k_{6}+a_{3}k_{5}\right)\,,\\
	B_{4} &= \left( a_{4,r}+a_{2}k_{6}-a_{3}k_{3}\right)\,,\\
	B_{5} &=a_{5,r}\,.
	\end{split}
\end{align}

	C.P. was funded by the Deutsche Forschungsgemeinschaft (DFG, German Research Foundation) - Project Number 420243324.
	S.Ch. was funded by the Transilvania Fellowships for Young Researchers 2021 grant. 
	This article is based upon work from COST Actions: CA21136 (Addressing observational tensions in cosmology with systematics and fundamental physics - CosmoVerse) and CA18108 (Quantum Gravity Phenomenology in the Multi-Messenger Approach), supported by COST (European Cooperation in Science and Technology).

\bibliographystyle{utphys}
\bibliography{SBS}

\end{document}